\newcommand{\Q}{{\mathbb Q}}
\def\Int{\operatorname{Int}}
\numberwithin{equation}{section}
\newtheorem{cor}[equation]{Corollary}
\newtheorem{lem}[equation]{Lemma}
\newtheorem{prop}[equation]{Proposition}
\newtheorem{thm}[equation]{Theorem}
\newtheorem{Example}[equation]{Example}
\newenvironment{ex}{\begin{Example}\rm}{\end{Example}}
\newtheorem{remark}[equation]{Remark}
\newenvironment{rmk}{\begin{remark}\rm}{\end{remark}}
\def\co{\colon\thinspace}
\newcommand{\e}{\varepsilon}
\def\a{\alpha}
\def\de{\delta}
\def\G{\Gamma}
\def\L{\Lambda}
\def\b{\beta}
\def\d{\partial}
\def\r{\rho}
\def\s{\sigma}
\def\R{\mathbb{R}}
\def\I{\mathbb{I}}
\def\Z{\mathbb{Z}}
\def\Z{\mathbb{Z}}
\def\S1{\bf S^1}
\newcommand{\overbar}[1]{\mkern 1.5mu\overline{\mkern-1.5mu#1\mkern-1.5mu}\mkern 1.5mu}
\def\equalsfill{$\m@th\mathord=\mkern-7mu
\cleaders\hbox{$\!\mathord=\!$}\hfill
\mkern-7mu\mathord=$}
\begin{document}

\abovedisplayskip=6pt plus3pt minus3pt
\belowdisplayskip=6pt plus3pt minus3pt

\title[Hyperbolization and regular neighborhoods]
{\bf Hyperbolization and regular neighborhoods}

\keywords{hyperbolization, negative curvature, thickening, spine.}
\thanks{\it 2010 Mathematics Subject classification.\rm\ Primary 53C23, Secondary 20F67, 57Q40, 57N55.}
\thanks{This work was partially supported by the Simons Foundation grant 524838.}

\author{Igor Belegradek}
\address{Igor Belegradek\\ School of Mathematics\\ Georgia Tech\\ Atlanta, GA, USA 30332\vspace*{-0.05in}}
\email{ib@math.gatech.edu}

\begin{abstract}
We show that the hyperbolization of polyhedra pulls back  
regular neighborhoods of {\scshape pl} submanifolds. Applying this
to the Riemannian
version of the hyperbolization due to Ontaneda gives 
open complete manifolds of pinched negative curvature
that are homotopy equivalent to closed smooth manifolds but contain no
smooth spines. We also find open complete negatively pinched manifolds
that are homotopy equivalent to closed non-smoothable manifolds.
\end{abstract}
\maketitle
\thispagestyle{empty}

\vspace{-17pt}

\section{Introduction}

In~\cite{Gro-hypgr} Gromov introduced several procedures 
that turn a simplicial complex $K$ into a polyhedron with 
piecewise-Euclidean metric of nonpositive curvature. 
Roughly speaking, every simplex of $K$ is replaced with a 
{\em hyperbolized simplex} of the same dimension, which are then assembled 
in the combinatorial pattern given by $K$. 
The procedures were further elaborated and developed in~\cite{DavJan, ChaDav}, and
in particular, the latter paper introduced the {\em strict hyperbolization}
that converts a finite simplicial complex $K$ into
a finite piecewise-hyperbolic locally CAT($-1$) polyhedron $\bm{K}$ of the same dimension such that
the following holds:
\begin{itemize}
\item[(1)] The procedure is functorial, i.e., an inclusion $L\subset K$ 
of a subcomplex induces an isometric embedding $\bm i\co \bm L\to \bm K$ onto a locally convex subset. 
Hence given $p\in \bm L$, the map $\bm i_*$ embeds $\pi_1(\bm L, p)$ 
onto a quasiconvex subgroup of the hyperbolic group $\pi_1(\bm K, p)$.
\item[(2)] The link of a simplex $L$ in $K$ is
{\scshape pl} homeomorphic to the link of $\bm L$ in $\bm K$. 
Hence if $L$, $K$ are {\scshape pl} manifolds, then so are $\bm{L}$, $\bm{K}$,
and furthermore, $L$ is locally flat in $K$ if and only if $\bm L$ is locally flat in $\bm K$.
\item[(3)] There is a continuous map $h\co \bm{K}\to K$ that 
\begin{itemize}
\item[(3a)]
pulls back rational Pontryagin classes, 
\item[(3b)] 
restricts to a homeomorphism on the one-skeleton, and hence induces 
a bijection $\pi_0(\bm K)\to\pi_0(K)$.
\item[(3c)]
is surjective on homology with coefficients in any commutative ring $R$ with identity. 
Hence if $K$ is a closed $R$-oriented manifold, then so is $\bm{K}$, and 
$h$ is injective on cohomology with coefficients in $R$. 
\end{itemize}
\end{itemize}

In particular, if $K$ is a 
connected closed oriented manifold, then so is $\bm K$.
We add to the above list with the following.

\begin{thm}
\label{thm: pullback}
Let $K$ be a closed {\scshape pl} manifold, and $L$ be a boundaryless
{\em(}not necessarily locally flat\,{\rm )} {\scshape pl} submanifold of $K$. 
If $h\co \bm K\to K$ is a  
hyperbolization map, and $\bm L=h^{-1}(L)$, then $h$ pulls
the regular neighborhood of $L$ in $K$ back to the regular neighborhood of 
$\bm L$ in $\bm K$.
\end{thm}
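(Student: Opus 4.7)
The plan is a local, simplex-by-simplex analysis, resting on the face structure of hyperbolized simplices (property~(1)) and the combinatorial preservation of links (property~(2)). Choose a triangulation of $K$ in which $L$ is a full subcomplex, and use this triangulation for the hyperbolization. A convenient regular neighborhood of $L$ in $K$ is then the ``half-join''
\[ N \;=\; \bigcup_{\sigma} \{(1-t)x + ty : x \in \tau,\; y \in \tau',\; t \in [0,1/2]\}, \]
where for each simplex $\sigma$ meeting $L$ one writes $\sigma = \tau \ast \tau'$ with $\tau = \sigma \cap L$ (a single face of $\sigma$, by fullness) and $\tau'$ the opposite face; the deformation retraction along the join parameter $t$ shows $N$ is a PL regular neighborhood. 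By functoriality, $\bm K = \bigcup_\sigma \bm\sigma$ is assembled from hyperbolized simplices with $h$ restricting on each piece to a face-preserving map $h_\sigma \co \bm\sigma \to \sigma$, so $\bm L = h^{-1}(L) = \bigcup_{\mu \subset L}\bm\mu$, consistent with the hypothesis.

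The central step is to show, simplex by simplex, that $h_\sigma^{-1}(N \cap \sigma)$ is a regular neighborhood of $\bm\tau$ in $\bm\sigma$. My plan is to construct a PL triangulation of $\bm\sigma$ in which $\bm\tau$ is a full subcomplex and $h_\sigma^{-1}(N \cap \sigma)$ is the simplicial neighborhood of $\bm\tau$ in its second derived subdivision. Property~(2) supplies the combinatorial matching of links between $\bm\sigma$ and $\sigma$, permitting one to pull a suitable triangulation of $\sigma$ back through $h_\sigma$; it then remains to know that $h_\sigma^{-1}(\{t = 1/2\})$ is a bicollared PL hypersurface in $\bm\sigma$ separating $\bm\tau$ from $\bm{\tau'}$, and that the ``inner side'' collapses onto $\bm\tau$. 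Both facts should follow from the face-preserving form of $h_\sigma$, established by induction on $\dim\sigma$ using that each face of $\bm\sigma$ is itself a hyperbolized simplex, so the structure on $\partial\bm\sigma$ comes from the inductive hypothesis and one extends inward via a bicollar.

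Once the local statement holds, gluing is automatic: on a shared face $\mu$ of two simplices $\sigma_1, \sigma_2$ of $K$, the hyperbolized face $\bm\mu$ is common to $\bm{\sigma_1}$ and $\bm{\sigma_2}$, and both local regular neighborhoods restrict to the same subset of $\bm\mu$ by naturality of the construction. Hence the pieces glue to exhibit $h^{-1}(N)$ as a PL regular neighborhood of $\bm L$ in $\bm K$. The principal obstacle throughout will be the central local step: since $h_\sigma$ is not simplicial in any evident way, the preimage of a simple set such as $\{t \leq 1/2\}$ could a priori be rather irregular, and controlling its PL topology demands engaging with the internal geometry of the Charney--Davis (or Ontaneda) construction of the hyperbolized simplex $\bm\sigma$.
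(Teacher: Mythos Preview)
Your central local step is a genuine gap, and the tools you invoke do not fill it. Property~(2) controls links of simplices---local information near the skeleton of $\bm\sigma$---not preimages of a hypersurface like $\{t=1/2\}$ sitting in the interior of $\sigma$. Your induction on $\dim\sigma$ likewise constrains only $\partial\bm\sigma$; ``extending inward via a bicollar'' presupposes exactly the regularity you are trying to prove, since the map $f\co X\to\triangle$ defining the hyperbolized simplex is essentially unconstrained on the interior of $X$. There is also a second issue you have not addressed: even granting that $h^{-1}(N)$ is \emph{a} regular neighborhood of $\bm L$, the theorem asserts it is the \emph{pullback thickening} $h^*N$ in the sense of Rourke--Sanderson (or Cappell--Shaneson and McCrory in codimension $\le 2$), which requires $h$ to restrict to a map of block (respectively, cone) bundles, not merely to carry one regular neighborhood onto another.

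The paper's proof sidesteps both problems by working externally rather than simplex-by-simplex. It realizes $\bm K$ as the fiber product of the folding map $p_{\!_K}\co K\to\triangle$ and the hyperbolized simplex $f\co X\to\triangle$, hence as a submanifold of $\mathcal K:=K\times X$ (after attaching a collar to $X$ to stay away from the boundary). Assuming $f$ is \emph{proper}---a local product condition near faces, spelled out in the paper's appendix---$\bm K$ is locally flat in $\mathcal K$ with trivial normal bundle, and likewise $\bm L\subset\mathcal L:=L\times X$. The normal block bundle (or Stone stratification, when $k-l\le 2$) of $\mathcal L$ in $\mathcal K$ is tautologically the pullback of $\nu_L^K$ along the projection $\mathcal L\to L$, and $h$ factors through that projection. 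One then applies block transversality---Rourke--Sanderson for $k-l>2$, Stone's stratified version for $k-l\le 2$: a small ambient isotopy of $\mathcal K$ makes $\bm K$ block transverse to $\mathcal L$, after which the normal block bundle of $\bm L$ in $\bm K$ is literally the restriction $\nu_{\mathcal L}^{\mathcal K}|_{\bm L}$, hence the $h$-pullback. No analysis of $h_\sigma^{-1}$ on interior level sets is ever needed; the only input from the hyperbolized simplex is the ``proper'' condition, and the heavy lifting is done by the transversality theorems.
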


Even though Theorem~\ref{thm: pullback} 
refers to the strict hyperbolization, it also holds 
for other hyperbolization procedures, see Section~\ref{sec: hyperb pulls back}.
The proof of Theorem~\ref{thm: pullback} hinges on the stratified
block transversality due to Stone~\cite{Sto72}, which reduces to the work of
Rourke and Sanderson~\cite{RS-II} when $\dim(K)-\dim(L)>2$. 

To make sense of Theorem~\ref{thm: pullback} let us review properties of
regular neighborhoods, which are also known as thickenings; the
two terms will be used interchangeably, see Section~\ref{sec: thick} for details. 
Basically, thickenings  and regular neighborhoods  
are related in the same way as vector bundles and normal bundles of smooth submanifolds.  
Recall that a regular neighborhood of a 
smooth codimension $q$ submanifold $M$ is a linear disk bundle,
and isomorphism classes of such bundles are classified by homotopy classes of maps
from $M$ to $BO_q$. A similar theory for (compact boundaryless) 
{\scshape pl} submanifolds $M$ was developed by Rourke and Sanderson 
for $q\ge 3$~\cite{RS-I}, and Cappell and Shaneson for $q=2$~\cite{CS76}, 
and a unified treatment for any $q\ge 1$ can be found in~\cite{Mc-cone}.
The only known thickenings 
of codimension $q=1$ are the \mbox{$I$-bundles}, and the yet 
unresolved Schoenflies conjecture predicts that there is no other
examples. 
We refer to $M$ as a {\em spine\,} of the thickening; more generally,
any closed manifold that is a deformation retract of a manifold 
is called a {\em spine}.
Thickenings can be pullbacked via continuous maps, and 
concordance classes of codimension $q$ thickenings of $M$  
bijectively correspond to the homotopy classes of maps of 
$M$ into a certain classifying space.  
Concordant thickenings of $M$ are {\scshape pl} homeomorphic rel $M$, 
and the converse is true if $q\ge 3$.

Ontaneda~\cite{Ont-ihes} established a Riemannian version of strict hyperbolization: 
for every $\e>0$ and a smoothly triangulated closed 
$k$-manifold $K$ with $k\ge 2$ there is
a hyperbolized $k$-simplex as in~\cite{ChaDav} such that the corresponding 
strict hyperbolization $\bm K$  admits a Riemannian metric $g_{\e, K}$
with sectional curvature within $[-1-\e, -1]$.
Here is a smooth version of Theorem~\ref{thm: pullback}.

\begin{cor} 
\label{cor: Ontaneda smooth}
If $L$ is a boundaryless smooth 
submanifold of closed smooth manifold $K$ of dimension $k\ge 2$ 
that is smoothly triangulated with $L$
as a subcomplex, then there is a hyperbolized 
$k$-simplex and a smooth structure on $\bm K$
such that 
\begin{itemize}
\item[\textup(a)] 
$\bm L$ is a smooth submanifold of $\bm K$
whose normal bundle is isomorphic as a linear disk bundle
to the pullback via $h$ of the normal bundle to $L$ in $K$,
\item[\textup(b)] $\bm K$ admits a Riemannian metric 
with sectional curvature within $[-1-\e, -1]$.
\end{itemize}
\end{cor}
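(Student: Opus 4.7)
The plan is to combine Ontaneda's Riemannian hyperbolization with Theorem~\ref{thm: pullback}. First I would apply Ontaneda's theorem to the smoothly triangulated closed $k$-manifold $K$ (which contains $L$ as a subcomplex), producing a hyperbolized $k$-simplex so that $\bm K$ admits a Riemannian metric $g_{\e, K}$ with sectional curvature in $[-1-\e, -1]$. This gives part~(b) directly; it also fixes a smooth structure on $\bm K$.

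For part~(a), the first task is to check that $\bm L$ is a smooth submanifold of $\bm K$. By the functoriality property~(1) the inclusion $L \subset K$ yields an isometric, locally convex embedding $\bm L \hookrightarrow \bm K$; concretely $\bm L$ is the union of the hyperbolized lower-dimensional faces coming from the simplices of $L$. Property~(2) ensures that $\bm L$ is locally flat in $\bm K$ with the expected codimension. Since Ontaneda's smoothing of the hyperbolized $k$-simplex as a Riemannian manifold with corners is built stratum-by-stratum and is consistent across faces, it restricts to a smoothing of each lower face, and these piece together to make $\bm L$ a smooth submanifold of $\bm K$.

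Next I would identify the normal bundle. Theorem~\ref{thm: pullback} gives $h^{-1}(N(L)) = N(\bm L)$ as regular neighborhoods (canonically up to {\scshape pl} isomorphism rel the spines). Because $L$ is smoothly embedded in $K$, the regular neighborhood $N(L)$ is {\scshape pl}-isomorphic to the smooth normal disk bundle $D(\nu)$, so pulling back along $h\vert_{\bm L}\co \bm L \to L$ presents the {\scshape pl} bundle underlying $N(\bm L)$ as $D(h^*\nu)$. On the other hand, since $\bm L$ is now a smooth submanifold, its smooth normal bundle $\eta$ supplies a tubular neighborhood {\scshape pl}-isomorphic to $D(\eta)$, and uniqueness of regular neighborhoods identifies this with $N(\bm L)$.

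The main obstacle is promoting the resulting {\scshape pl} identification $D(\eta) \cong D(h^*\nu)$ rel $\bm L$ to a \emph{linear} disk-bundle isomorphism $\eta \cong h^*\nu$. My plan is to exploit the flexibility in Ontaneda's smoothing: before fixing the Riemannian metric, prescribe on each face of the hyperbolized $k$-simplex the linear collar structure coming from $h^*\nu$, and then smooth the corners compatibly. The warped-product models that Ontaneda uses near each face can accommodate any prescribed linear normal structure with only a $C^0$-small perturbation of the metric, and the pinching bounds $[-1-\e,-1]$ are stable under such perturbations. Verifying that this refinement of the smoothing is simultaneously (i) compatible across all faces so that $\bm L$ remains smoothly embedded, and (ii) still supports Ontaneda's pinched negatively curved Riemannian metric, is the key technical point; once done, the smooth normal bundle of $\bm L$ becomes $h^*\nu$ on the nose, yielding~(a).
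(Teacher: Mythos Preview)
Your overall plan is sound up through the {\scshape pl} identification of regular neighborhoods, but the route you take after that point diverges from the paper and leaves a substantial gap.

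The paper does \emph{not} argue that Ontaneda's smoothing automatically makes $\bm L$ a smooth submanifold, nor does it modify Ontaneda's construction to force a prescribed normal collar. Instead it proceeds as follows. From Theorem~\ref{thm: pullback}, the regular neighborhood of $\bm L$ in $\bm K$ is the $h$-pullback of the regular neighborhood of $L$ in $K$; since $L$ is smooth in $K$, that neighborhood is already a linear disk bundle, so $\bm L$ sits in $\bm K$ as a {\scshape pl} submanifold with a linear disk bundle neighborhood (namely $h^*\nu$). Now one invokes a classical result of Lashof--Rothenberg \cite[Theorem~7.3]{LR65}: a {\scshape pl} submanifold of a smooth manifold that has a linear normal disk bundle can be moved by a piecewise-differentiable homeomorphism $d$ to a smooth submanifold. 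Pulling back the smooth structure and the metric $g_{\e,K}$ by $d$ then makes $\bm L$ smooth with normal bundle $h^*\nu$, while preserving the curvature bounds. This simultaneously handles both the smoothness of $\bm L$ and the identification of its normal bundle.

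By contrast, your proposal hinges on two unproved assertions: (i) that Ontaneda's stratum-by-stratum smoothing already renders $\bm L$ smooth, and (ii) that one can redo the smoothing with a prescribed linear normal structure along $\bm L$ while keeping the sectional curvature in $[-1-\e,-1]$. Neither of these is stated in \cite{Ont-ihes}, and establishing them would amount to revisiting the delicate analytic part of that paper. The ``key technical point'' you flag is therefore not a detail but the entire difficulty, and the paper bypasses it completely with the Lashof--Rothenberg citation. I would replace your last two paragraphs with that argument.
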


For example, if $L$ is any smoothly embedded  $2$-sphere in an 
oriented $4$-manifold $K$ with normal 
Euler number $e$, then Corollary~\ref{cor: Ontaneda smooth} 
gives a genus $g$ hyperbolic 
surface $\bm L$ with the same normal Euler number $e$
in the negatively pinched closed
Riemannian $4$-manifold $\bm K$. 
Here $g$ depends on a smooth triangulation of
$K$ that contains $L$ as a subcomplex, and in particular, 
$g$ depends on $e$.

Here is a way to produce open complete 
negatively curved Riemannian manifolds.
If $L\to K$ is a {\scshape pl}-embedding of closed manifolds, 
then the inclusion $\bm L\to\bm K$ is $\pi_1$-injective, and hence
the subgroup $\pi_1(\bm L)$ corresponds to a covering space 
$\overbar{\bm K}$ of $\bm K$. Since both $\bm L$ and $\bm K$ 
are aspherical, $\overbar{\bm K}$ deformation retracts
onto a ({\scshape pl}-embedded) lift of $\bm L$.
If $K$ is smoothable, the pullback of Ontaneda's negatively pinched
Riemannian metric on $\bm K$ 
to $\overbar{\bm K}$ is {\em convex-cocompact}, i.e., 
$\overbar{\bm K}$ deformation retracts onto a 
compact codimension zero locally convex subset 
whose interior is diffeomorphic to
$\overbar{\bm K}$, see Remark~\ref{rmk: convex-cocompact}.
To avoid confusion 
we stress that $\bm L$ need not be locally convex 
in Ontaneda's Riemannian metric on $\bm K$, and in fact,
the submanifold $\bm L$ may not even be smoothable, which is
the main theme of this paper. 
By contrast, the strict hyperbolization ensures that
$\bm L$ is locally convex in the 
locally CAT($-1$) metric on $\bm K$.

Previously known open manifolds that are homotopy equivalent to closed manifolds
and admit complete negatively pinched metrics are as follows:
\begin{itemize}
\item The total space of any vector bundle over a closed negatively curved manifold admits 
a complete pinched negatively curved metric~\cite{And-vb}. 
\item In~\cite{GLT, Kui, Kap89, Bel-odd, GKL, AGG} one finds 
complete locally symmetric metrics of negative curvature that are interiors
of codimension $2$ thickenings of real hyperbolic manifolds of dimensions $\le 3$.
Some of these have smooth spines, while others have non-locally-flat 
{\scshape pl} spines.
\item If $Y$ is a totally geodesic submanifold in a symmetric space $X$ of negative curvature,
and $\Gamma$ is a discrete torsion-free isometry group of $X$ that stabilizes $Y$, 
then the nearest point projection $X\to Y$ is a $\Gamma$-equivariant vector bundle,
and hence $X/\Gamma$ is the total space of a vector bundle over $Y/\Gamma$. 
\end{itemize}

The following theorem exploits the differences between smoothable 
{\scshape pl} thickenings and linear disk bundles.
Here $\left[q/2\right]$ is the largest integer that is $\le q/2$.

\begin{thm}
\label{thm: non-smooth}
Let $l, q\in\Z$ such that 
either $\ l\ge 2=q\ $ or $\ l\ge 4\left[q/2\right]>0$.
Then for any $\e>0$ there is 
a closed Riemannian $(q+l)$-manifold $\bm K$ of sectional curvature 
in $[-1-\e, -1]$, 
a closed smooth locally CAT$(-1)$ $l$-manifold $\bm L$ 
that is {\scshape pl}-embedded into $\bm K$,
and a convex-cocompact covering space $\overbar{\bm K}$ of $\bm K$ 
such that\vspace{4pt} \newline
\textup{$\phantom{\quad}$(i)} $\overbar{\bm K}$ deformation retracts 
onto a {\scshape pl}-embedded lift of $\bm L\subset\bm K$,
\vspace{4pt} \newline
\textup{$\phantom{\quad}$(ii)} $\overbar{\bm K}$ has no deformation retraction
onto a smooth $l$-dimensional submanifold.\vspace{4pt}
Moreover, if $l\ge 4$, then
any finitely-sheeted cover of $\overbar{\bm K}$ deformation retracts
onto a {\scshape pl}-embedded closed $l$-manifold, but admits no 
deformation retraction a smoothly embedded closed $l$-manifold.
\end{thm}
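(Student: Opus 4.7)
The plan is to realize $\bm K$ and $\bm L$ as strict hyperbolizations of a carefully chosen pair $(K,L)$ -- a closed smoothly triangulated smooth $(q+l)$-manifold $K$ together with a PL-embedded closed $l$-submanifold $L$ -- and then take the cover $\overbar{\bm K}$ of $\bm K$ corresponding to the quasiconvex subgroup $\pi_1(\bm L)\subset \pi_1(\bm K)$ (property~(1)). The pair $(K,L)$ is arranged so that the regular neighborhood $N=\nu_{K}(L)$ is a PL thickening that is \emph{not} PL-homeomorphic to the total space of any smooth rank-$q$ disk bundle over any closed smooth $l$-manifold. By Theorem~\ref{thm: pullback}, $\bm N:=h^{-1}(N)$ is the regular neighborhood of $\bm L:=h^{-1}(L)$ in $\bm K$, and the hyperbolization map $h$ will be used to transport the obstruction from $L$ to $\bm L$.

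I split into the two hypothesis cases. For $q=2$ and $l\ge 2$: I take $L$ as a non-locally-flat PL-embedded smooth $l$-manifold in a smoothly triangulated $K$, via a codimension-$2$ knot construction (e.g.\ suspension of a non-trivial $(l-1)$-knot). Property~(2) transports non-local-flatness to $\bm L\subset\bm K$, and the Cappell--Shaneson classification of codim-$2$ PL thickenings provides a normal block-bundle invariant that distinguishes $\bm N$ from any smooth rank-$2$ disk bundle. For $l\ge 4\left[q/2\right]>0$: I take $L$ a smooth closed $l$-manifold and $N$ a PL rank-$q$ block thickening of $L$ whose top rational Pontryagin class, read off from $p(TN)/p(TL)\in H^{4\left[q/2\right]}(L;\Q)$, violates the constraint forced by smooth rank-$q$ vector bundles (for $q$ even, the identity $p_{q/2}=e(\xi)^{2}$; for $q$ odd, the analogous top-degree relation). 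I then let $K$ be the double of $N$ along $\partial N$, which is a smoothable closed PL $(q+l)$-manifold, and choose a smooth triangulation with $L$ as a subcomplex.

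After applying Ontaneda's Riemannian strict hyperbolization to $K$, $\bm K$ carries a Riemannian metric of curvatures in $[-1-\e,-1]$. The smooth locally CAT$(-1)$ structure on $\bm L$ is obtained by applying Ontaneda separately to a smooth triangulation of $L$. The cover $\overbar{\bm K}$ is convex-cocompact by Remark~\ref{rmk: convex-cocompact} and deformation-retracts onto a PL-embedded lift of $\bm L$, proving~(i). For~(ii), suppose $\overbar{\bm K}$ deformation-retracts onto a smooth closed $l$-submanifold $M$. Its tubular neighborhood is a smooth rank-$q$ disk bundle $D(\xi)$ over $M$, and since any two compact codimension-$0$ deformation retracts of a non-compact manifold with one tame end are PL-homeomorphic (uniqueness of the ``convex core''), $\bm N$ is PL-homeomorphic to $D(\xi)$. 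In Case~B, the Pontryagin-class obstruction is transferred from $H^{4\left[q/2\right]}(L;\Q)$ to $H^{4\left[q/2\right]}(\bm L;\Q)$ by properties~(3a) and (3c), and -- combining topological invariance of rational Pontryagin classes with topological rigidity of aspherical negatively curved manifolds to identify the rational Pontryagin classes of $M$ with those of $\bm L$ -- this rules out $\bm N\cong D(\xi)$. In Case~A, the Cappell--Shaneson codim-$2$ invariant of $(\bm N,\bm L)$, non-trivial by construction and preserved under $h^{-1}$, cannot be realized by $D(\xi)$.

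The moreover statement for $l\ge 4$ is obtained by applying the same argument to every finitely-sheeted cover of $\overbar{\bm K}$: such a cover corresponds to a finite-index subgroup of $\pi_1(\bm L)$, deformation retracts onto a PL-embedded finite cover $\tilde{\bm L}$ of $\bm L$, and both our obstructions are preserved under finite covers -- rational cohomology pulls back injectively via transfer, and topological rigidity of aspherical manifolds of dimension $\ge 5$ lets me identify any candidate smooth spine with $\tilde{\bm L}$ on the level of characteristic classes. The main obstacle is verifying in Case~B that the Pontryagin class data of $\bm N$ is genuinely incompatible with \emph{every} smooth rank-$q$ disk bundle structure over \emph{every} closed $l$-manifold homotopy equivalent to $\bm L$; this ultimately comes down to injectivity of $h^{*}$ on rational cohomology combined with the topological invariance of Pontryagin classes and the rigidity afforded by the CAT$(-1)$ geometry of the spine.
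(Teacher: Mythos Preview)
Your overall architecture is right and matches the paper: build a smoothable thickening $N$ of a smooth $L$ whose characteristic classes violate a relation forced on linear disk bundles, double to get $K$, apply Ontaneda, pass to the $\pi_1(\bm L)$-cover, and compare any putative smooth spine to $\bm L$ via Proposition~\ref{prop: engulf} and Remark~\ref{rmk: borel}. But two of your specific obstructions do not work as written.

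\textbf{Odd $q$.} You invoke ``the analogous top-degree relation'' for rank-$q$ vector bundles with $q=2k+1$, but there is none: $H^*(BSO_{2k+1};\Q)=\Q[p_1,\dots,p_k]$ is free on the Pontryagin classes, so no identity in $H^{4k}$ constrains $p_k$ of a linear bundle. Pontryagin classes alone cannot separate block bundles from linear ones in degree $4k$. The paper instead uses Milnor's class $o_q\in H^{2q-2}(BSG_q;\Q)$ of the boundary spherical fibration: for linear $\xi$ one has $o_{2k+1}(\xi)$ proportional to $p_k(\xi)$, while in $H^*(BS\widetilde{PL}_q;\Q)$ the classes $o_q$ and $p_k$ are independent. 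One realizes (Theorem~\ref{thm: smoothable thick of codim >2}) a smoothable block bundle with $o_q\neq 0$ and all $p_i=0$; this survives hyperbolization by (3c) and is preserved under change of spine by Proposition~\ref{prop: engulf}(1), contradicting linearity. Your scheme needs this extra invariant.

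\textbf{$q=2$, $l\in\{2,3\}$.} Suspending a knot gives a non-locally-flat $L$, but non-local-flatness of \emph{one} spine says nothing about the existence of a \emph{different} smooth spine; the regular neighborhood of a PL-knotted $2$-sphere can easily be PL-homeomorphic to $S^2\times D^2$. You need an invariant of the thickening, not of the embedding. The paper takes instead a codimension-$2$ thickening of $S^2$ (or $S^2\times S^1$) with nontrivial normal invariant $\eta\in[L,G/PL]$, pushes it through $h$ using $H_2$-surjectivity, and then invokes the Cappell--Shaneson Poincar\'e-embedding criterion \cite[Theorem~6.2]{CS76}: since any homotopy equivalence of closed $3$-manifolds is homotopic to a homeomorphism (geometrization), a smooth spine would force the normal invariant to vanish. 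Your ``uniqueness of the convex core'' step also needs $\dim\ge 5$, which fails for $l=2$; the paper stabilizes by $\times S^1$.

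\textbf{Smoothability of $K$.} You assert the double of $N$ is smoothable but do not justify it; an arbitrary PL block bundle over a smooth $L$ need not have smoothable total space. The paper arranges this explicitly (Theorem~\ref{thm: smoothable thick of codim >2}, Theorem~\ref{thm: realization codim 2}) by lifting the classifying map through $BSO$ simultaneously with prescribing the characteristic classes.
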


The conclusion of Theorem~\ref{thm: non-smooth} for $l\ge 4$ 
hinges on properties of certain rational characteristic classes 
which live in cohomology of the classifying space of oriented 
codimension $q$ thickenings. 
The task is to construct thickenings whose characteristic classes
satisfy an identity that fails in $H^*(BSO_q;\Q)$.
The classes are preserved by the canonical homotopy equivalence 
between different spines, and also survive under $h$
(because it is injective on rational cohomology). Hence 
there is no smooth spine.

If $l$ is $2$ or $3$, the above-mentioned characteristic classes vanish, and instead we argue that there are thickenings of $L=S^2$ and $L=S^2\times S^1$
with nontrivial normal invariant, and the same is true for $\bm L$
because $h$ is surjective on homology. 
The canonical homotopy equivalence between between different spines
is homotopic to a {\scshape pl} homeomorphism (because $l\le 3$), and
therefore has trivial normal invariant. 
By a result of Cappell and Shaneson~\cite{CS76}
the two normal invariants have to agree, hence there is no smooth spine.

By Haefliger's metastable range embedding theorem~\cite{Hae61} any homotopy equivalence from a closed smooth $l$-manifold
to an open smooth $(l+q)$-manifold is homotopic to a smooth embedding 
if $l< 2q -2$. 
Thus the dimension bound in Theorem~\ref{thm: non-smooth} is sharp for odd $q$,
and close to being sharp for even $q$.

Finally, here is a
version of Theorem~\ref{thm: non-smooth} with non-smoothable $\bm L$. 
The proof does not use Theorem~\ref{thm: pullback}.

\begin{thm}
\label{thm: non-smoothable spine}
If $\e>0$, $k\in\Z$ and $l\in 4\Z$ with $k\ge 2l-1\ge 15$, then 
there is a $\pi_1$-injective {\scshape pl} embedding $\bm{L}\to \bm{K}$ of closed manifolds with \mbox{$\dim(\bm{L})=l$} and $\dim(\bm{K})=k$, and such that 
\begin{itemize}
\item
$\bm{L}$ is locally CAT$(-1)$, and not homotopy equivalent to a 
smooth $l$-manifold, 
\item
$\bm{K}$ has Riemannian metric of sectional curvature within $[-1-\e, -1]$,
\item 
$\bm K$ has 
a convex-cocompact covering space $\overbar{\bm K}$ 
that deformation retracts onto a 
{\scshape pl}-embedded lift of $\bm L$. 
\end{itemize}
\end{thm}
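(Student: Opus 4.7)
The plan is to apply Ontaneda's pinched strict hyperbolization to a pair $(K,L)$, where $K$ is a closed smoothly triangulated $k$-manifold and $L$ is a closed simply connected PL $l$-submanifold chosen in advance to be not homotopy equivalent to any closed smooth $l$-manifold. The hypotheses $l\in 4\Z$ and $k\ge 2l-1\ge 15$ are used respectively to produce such an $L$ via standard PL surgery theory (available for $l\ge 8$) and to PL-embed $L$ into $K=S^k$ by metastable-range PL embedding theory for simply connected manifolds. A concrete source of such an $L$ is an element of the PL structure set of some closed smooth $l$-manifold $N$ that does not lift to the smooth structure set, so the non-smoothability is detected by the obstruction theory controlled by the homotopy fiber $PL/O$ and the Wall $L$-groups.

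After arranging a smooth triangulation of $K$ that contains $L$ as a subcomplex (via Whitehead--Munkres smoothing of a PL triangulation of the pair $(K,L)$), I apply Ontaneda's pinched strict hyperbolization~\cite{Ont-ihes}. The result $\bm K$ admits a Riemannian metric of sectional curvature in $[-1-\e,-1]$. By properties (1) and (2) of strict hyperbolization, $\bm L:=h^{-1}(L)$ is a closed PL $l$-manifold, PL-embedded and locally convex in $\bm K$, and inherits a locally CAT$(-1)$ piecewise hyperbolic structure; the inclusion $\bm L\hookrightarrow\bm K$ is $\pi_1$-injective with quasi-convex image. The corresponding covering space $\overbar{\bm K}\to\bm K$ with fundamental group $\pi_1(\bm L)$ deformation retracts onto a PL-embedded lift of $\bm L$, and the pullback of Ontaneda's Riemannian metric to $\overbar{\bm K}$ is convex-cocompact, by the same argument as in the paragraph preceding Theorem~\ref{thm: non-smooth}.

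The main obstacle is verifying that $\bm L$ itself is not homotopy equivalent to any closed smooth $l$-manifold. The plan is to exploit that $h\co\bm L\to L$ is injective on rational cohomology (property (3c) with $R=\Q$) and pulls back rational Pontryagin classes (property (3a)), so $p_j(\bm L)=h^*p_j(L)$ lies in the subring $h^*H^*(L;\Q)\subset H^*(\bm L;\Q)$. If $\bm L$ were homotopy equivalent to a smooth $l$-manifold $M$, one would combine the Hirzebruch $L$-genus formula for $M$ with the identity $h_*[\bm L]=(\deg h)\cdot[L]$ to pin down rational tangential characteristic data on $L$ that forces a smooth manifold in the homotopy type of $L$, contradicting the choice of $L$. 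The delicate part is selecting $L$ in the first step so that its non-smoothability is witnessed by rational Pontryagin data lying in the image of $h^*$; this is where the detailed surgery-theoretic input enters, and is the step I expect to be the main obstacle.
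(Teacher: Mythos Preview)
Your overall framework matches the paper's: take a simply-connected closed {\scshape pl} $l$-manifold $L$ that is not homotopy equivalent to a smooth manifold, {\scshape pl}-embed it into $K=S^k$ (Irwin's theorem, using $k\ge 2l-1$ and simple connectivity), and apply Ontaneda's hyperbolization to a triangulation of the pair. The $\pi_1$-injectivity, the locally CAT$(-1)$ structure on $\bm L$, and the convex-cocompact cover $\overbar{\bm K}$ all follow as you indicate.

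The genuine gap is in your argument that $\bm L$ is not homotopy equivalent to a smooth manifold. Two issues:

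\textbf{(i) The choice of obstruction.} The paper does not use a generic ``element of the {\scshape pl} structure set not lifting to the smooth one''; it takes $L$ with a \emph{non-integer Pontryagin number}. Such $L$ exist (simply-connected) in every dimension $l\in 4\Z$, $l\ge 8$. Since $h\co\bm L\to L$ pulls back rational Pontryagin classes and has degree $\pm 1$ (by property (3c), $h$ is surjective on top integral homology), every Pontryagin number of $\bm L$ equals $\pm$ the corresponding one of $L$, so $\bm L$ inherits a non-integer Pontryagin number. A generic surgery-theoretic obstruction to smoothing might live in the finite groups $\pi_*(PL/O)$, over which $h$ gives no control; only rational tangential data survives.

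\textbf{(ii) The passage from $\bm L$ to $M$.} Your sketch --- use the Hirzebruch $L$-genus for $M$ together with $h_*[\bm L]=(\deg h)[L]$ to ``force a smooth manifold in the homotopy type of $L$'' --- does not work. The map $h$ is not a homotopy equivalence, so nothing about $M\simeq\bm L$ transfers back to the homotopy type of $L$; and the Hirzebruch formula only controls the one Pontryagin number equal to the signature, which is automatically an integer. What is actually needed, and what you are missing, is the following: $\bm L$ is closed, aspherical, with CAT$(0)$ (indeed hyperbolic) fundamental group, so by the Bartels--L\"uck solution of the Borel conjecture any homotopy equivalence $M\to\bm L$ from a closed manifold is homotopic to a homeomorphism; topological invariance of rational Pontryagin classes then forces $M$ to have the same (non-integer) Pontryagin numbers as $\bm L$, contradicting smoothness of $M$. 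This is precisely the content of Remark~\ref{rmk: borel}, and it is the step that makes the argument go through.
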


{\bf Acknowledgment} I am grateful to Colin Rourke for showing me in mid 1990s
how to compute the rational homotopy type of $BS\widetilde{PL}_q$, to John Klein
for refreshing my memory on the issue in~\cite{Kle-MO}, 
to Pedro Ontaneda for discussions on {\scshape pl}
transversality in relation to the hyperbolization,
which prompted me to write Appendix~\ref{app: transversality}, 
and to Oscar Randal-Williams for~\cite{RW-MO}.

{\bf Structure of the paper} 
Theorem~\ref{thm: pullback} and Corollary~\ref{cor: Ontaneda smooth}
are proved in Section~\ref{sec: hyperb pulls back} which is independent of
the rest of the paper. 
Section~\ref{sec: thick} gives
background on thickenings whose classifying spaces 
are discussed in Sections~\ref{sec: class block bundles} 
and~\ref{sec: classifying spaces of codimension 2 thickenings}.
Theorem~\ref{thm: non-smooth} for $q>2$
is proved in Section~\ref{sec: Smoothable thickenings}.
The case $q=2$ is treated in Section~\ref{sec: classifying spaces of codimension 2 thickenings} where we also establish 
Theorem~\ref{thm: non-smoothable spine}.
The three appendices contain some technical lemmas.

{\bf Notations and conventions} 
Let $\Z_{>0}$ and $\Z_{\ge 0}$ 
denote positive and nonnegative integers, respectively. 
Set $I^q=[-1,1]^q$, $I=[-1,1]=I^1$, and $\I=[0,1]$. 
Unless stated otherwise 
we work in the category of polyhedra and {\scshape pl} maps, and in particular,
{\em all manifolds, homeomorphisms, isotopies, and embeddings are {\scshape pl}};
see~\cite{Sta-tata, RS-book} for {\scshape pl} topology background.
A {\em submanifold\,} is a subcomplex homeomorphic to a manifold; 
thus a submanifold need not be locally flat.

\section{Background on thickenings} 
\label{sec: thick}
 
{\bf Notation} In this section $M$ is a closed connected $m$-manifold, $q\in\Z_{>0}$,
and $V$ is an open $(m+q)$-manifold.

If $M$ is a subpolyhedron of a compact $(m+q)$-manifold $W$
that collapses onto $M$~\cite[Chapter 3]{RS-book}, 
then $W$ is a {\em codimension $q$ thickening\,} of $M$. 
Two thickenings of 
$M$ are {\em equivalent\,} if they are homeomorphic rel $M$.
Any thickening $W$ of $M$ is equivalent to the star of $M$
in the second-derived subdivision in (any triangulation of) $W$,
and conversely, for any manifold that contains $M$ as a subpolyhedron
the star of $M$ in the second-derived subdivision is a thickening of $M$.

Thickenings with locally flat $M$ are precisely the
block bundles with fiber $I^q$~\cite[Corollary 4.6]{RS-I}.
By Zeeman's unknotting theorem $M$ is locally flat 
if $q\ge 3$~\cite{RS-book}. 

If a codimension $1$ thickening with 
non-locally flat $M$ exists, it would give a counterexample to the 
{\scshape pl} Schoenflies conjecture, 
which is true when $m\le 2$ and open for $m>2$.
Thus all known codimension one thickenings are block bundles.
Also every codimension one
thickening is topologically (rather than {\scshape pl}) homeomorphic to a linear $I$-bundle~\cite{Bro62, Mic64}. 

By contrast, thickenings of codimension $2$ often contain $M$ as a 
non-locally-flat submanifold, such as, e.g., a regular
neighborhood of embedded $2$-sphere in $S^4$
obtained as the suspension a {\scshape pl} knotted circle in $S^3$. 

If $q\le 2$, every block bundle with fiber $I^q$ has a structure of a 
linear disk bundle, which is unique up to an isomorphism~\cite[p.127]{Wal-book}. 
Thus block bundles and thickenings differ for $q=2$, and possibly for $q=1$.

Two thickenings of $M$ are {\em concordant\,} 
if they appear as boundaries of a thickening of $M\times \I$, 
see~\cite[p.172]{CS76} for details. 
Concordant thickenings are equivalent, i.e., homeomorphic rel $M$. 
Equivalent thickenings are concordant if $q\ge 3$~\cite[Corollary 1.8]{RS-I}.
If $q=2$, it is not easy to decide if two given non-concordant thickenings are
equivalent.
 
As we discuss later, the concordance classes of codimension $q$ thickenings
form a classifying space, which was constructed in~\cite{RS-I} for block bundles,
in~\cite{CS76} for codimension $2$ thickenings, and a unified construction 
for all $q\ge 1$ was given in~\cite{Mc-cone}. The homotopy type
of the classifying space is fairly well-understood, except when $q=1$ again
due to unresolved Schoenflies conjecture. 
The classifying space for block bundles 
of codimension $q\le 2$ is homotopy equivalent 
to $BO_q$.

Here is a common way of how thickenings arise.
Consider a homotopy equivalence $f\co M\to V$.
\begin{itemize}
\item
If $M$, $V$ are smooth and
$m\le 2q-3$, then $f$ is homotopic to a smooth embedding~\cite{Hae61},
whose normal disk bundle is a thickening. 
\item
If $q\ge 3$, then $f$ is homotopic to an embedding~\cite[Theorem 5.2.1]{DV-book}, 
and in particular, any regular neighborhood of $f(M)$ is a codimension 
$q$ thickening, which we call {\em the normal block bundle of $f$.}
\item
If $q=2$ and $V$ is the interior of a compact manifold with boundary, 
then~\cite[Theorem 6.1]{CS76} any homotopy equivalence $f\co M\to V$
is homotopic to an embedding if one of the following is true:
\begin{itemize}
\item
$m$ is odd and $f$ is {\em orientation-true} (i.e. $f$
preserves the first Stiefel-Whitney class of the tangent microbundles $TM$, $TW$), 
\item
$m$ is even, $m\neq 2$, and $M$ is simply connected.
\end{itemize}
\end{itemize}  
If $q=2$, the above restrictions cannot be dropped: 
There are so-called totally spineless manifolds $V$, which are
interiors of compact manifolds, 
such that no homotopy equivalence $M\to V$ is homotopic
to an embedding, where $M=T^2$~\cite{Mat75}, $M=S^2$~\cite{LevLid},
or any $M$ with even $m\ge 4$ and $H_1(M)\neq 0$~\cite{CapSha-symp}. 
If we do not assume that $V$ is the interior of a compact manifold, 
then more can be said: 
For every $m\ge 4$ there is an open smooth $(m+2)$-dimensional 
manifold $V$ 
that is homotopy equivalent to $S^2\times S^{m-2}$ but does not deformation retract to any compact subset, and there is a similar example with 
$m=2$ and $S^2\times S^{m-2}$ replaced by $S^2$~\cite{Ven}.

If $W$ is a thickening of $M$ of codimension $q\ge 3$,
then the inclusion $\d W\to W$ followed by a deformation retraction $W\to M$
is (homotopy equivalent over $M$ to)
a spherical fibration with fibers homotopy equivalent to $S^{q-1}$~\cite[Corollary 5.9]{RS-I}.
If this spherical fibration is orientable, its Thom class
generates $H^q(W,\d W)\cong\Z$, and we call the generator 
an {\em orientation} of $W$.

Define an {\em orientation} of a codimension $2$ thickening $W$ of $M$ as the orientation 
of the spherical fibration $\d W^\prime\to M$, 
where $W^\prime:=W\times I\supset M\times\{0\}=M$. 
The K\"unneth formula for $(W,\d W)\times (I, \d I)$ 
gives a natural isomorphism $H^2(W,\d W)\cong H^3(W^\prime,\d W^\prime)$,
hence this definition agrees with~\cite[p.172]{CS76}. 
Thus an orientation of a codimension $2$ thickening can also be thought
of as a generator in $H^2(W,\d W)$, cf.~\cite{Spi67}.

A thickening is {\em orientable\,} if it has an orientation.
It is easy to see that a thickening is orientable if and only if the inclusion $M\to W$
is orientation-true. 
An {\em equivalence of oriented thickenings\,} is a homeomorphism rel $M$
that preserves the orientation. 

Define the {\em Euler class\,} $e$ of an oriented codimension $q$
thickening $W$ of $M$ as the image of its orientation under the 
homomorphism $H^q(W,\d W)\to H^q(M)$ induced by the inclusion. 
If $q=2$, the definition appears in~\cite[p.178]{CS76}.
If $q\ge 3$, then $e$ is precisely the Euler class 
of the spherical fibration $\d W\to M$. 

In fact, one can define the Euler class $e(f)$ of any continuous
map $f\co M\to V$ of oriented manifolds as the image of 
the fundamental class $[M]$ under the composite
\begin{equation}
\label{diagram: euler class open}
H_n(M)\underset{f_*}{\to} H_n^{\mathrm{lf}}(V)\underset{PD}{\cong} H^q(V)\underset{f^*}{\to} H^q(M)
\end{equation}
where the middle map is the Poincar\'e duality isomorphism between the 
homology based on locally finite chains and the singular cohomology.
(More generally, the definition works if $f$ is orientation-true
and the homology groups in (\ref{diagram: euler class open}) have twisted
coefficients, which we avoid here). 

Here is a sketch on why the above definitions of the Euler class
agree when $f$ is the inclusion and $V$ is the interior of a thickening:
The Thom class can be thought of as the element in 
$H^q(V, V\setminus M)$ that is Poincar\'e dual to the fundamental 
class in $H_m(M)$, see~\cite[Chapter VIII, section 11]{Dol-book}, 
and this Poincar\'e duality isomorphism fits in a commutative
square as in~\cite[Theorem 5.9.3]{Bre-sheaf} whose other arrows are
the Poincar\'e duality isomorphism in 
(\ref{diagram: euler class open}) and 
the inclusion-induced maps $H_n(M)\to H_n^{\mathrm{lf}}(V)$ and 
$H^q(V, V\setminus M)\to H^q(V)$.

One can define the total rational Pontryagin class
of a continuous map $f\co M\to V$ via
$p(f)=p(\nu_M\oplus f^*\tau_V)$
by mimicking the Whitney sum formula, 
where $\nu_M$, $\tau_V$ are the stable normal and tangent microbundles of 
$M$, $V$, respectively. 
If $f$ is an embedding, then 
its stable normal microbundle $\nu_f$ is $\nu_M\oplus f^*\tau_V$~\cite{Mil-micro},
and $p(f)$ is the Pontryagin class of $\nu_f$. It then follows from~\cite{RW-MO} that
$p(f)=p(\nu_M)\cup f^*p(TV)$. 

If $f$ is an inclusion, $e(f)$ and $p(f)$ are called
the {\em normal Euler class\,} and the {\em normal rational Pontryagin class\,} of 
$M$ in $V$, respectively.

For the the remainder of this section we discuss manifolds 
that deformation retract to two different compact boundaryless 
submanifolds, which is the setting of Theorem~\ref{thm: non-smooth}. 

\begin{lem}
\label{lem: sphere bundles fhe}
If two thickenings $W$, $W^\prime$ of $M$, $M^\prime$, respectively, share 
the same interior $U$, then there is a fiber homotopy equivalence of 
the corresponding spherical fibrations that covers the composite 
of the inclusion $j^\prime\co M^\prime\to U$ followed by a 
deformation retraction $\pi\co U\to M$. 
\end{lem}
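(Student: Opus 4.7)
The plan is to realize both spherical fibrations as models living inside the common open manifold $U$, and to construct the required map via a collar push followed by a deformation retraction. The key identification is that for a codimension $q$ thickening $W$ of $M$ with $U=\operatorname{int}(W)$, the map
$$ p_M\co U\setminus M\hookrightarrow U\xrightarrow{r_M}M $$
(for any {\scshape pl} deformation retraction $r_M$ of $U$ onto $M$) is fiber-homotopy equivalent over $M$ to the spherical fibration $\d W\to M$. Indeed, the collapse of $W$ onto $M$ restricts to a deformation retraction of $W\setminus M$ onto $\d W$, giving $W\setminus M\to M$ fiber-homotopy equivalent to $\d W\to M$; removing the closed collar of $\d W$ in $W\setminus M$ yields a further homotopy equivalence $U\setminus M\simeq W\setminus M$ over $M$. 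The same identification applies to $W'$.

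To build $\Phi$, I choose a collar of $\d W'$ in $W'$ and push $\d W'$ slightly inward to a homeomorphic copy $\d W'_0\subset U$. Since $M\subset \operatorname{int}(W')$ is compact, one can take this push small enough to ensure $\d W'_0\cap M=\emptyset$, so $\d W'_0\subset U\setminus M$. I then set
$$ \Phi\co \d W'\xrightarrow{\cong} \d W'_0\hookrightarrow U\setminus M\xrightarrow{\rho}\d W, $$
with $\rho$ the deformation retraction coming from the identification above. Homotopy commutativity of the square over $\pi\circ j'$ then follows because both composites $\d W'\to M$ arise from {\scshape pl} deformation retractions of $U$ onto $M$ (one factoring through $M'$), and any two such retractions are homotopic.

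Finally, to conclude that $\Phi$ is a fiber homotopy equivalence it suffices, by standard fibration theory, to show $\Phi$ is a homotopy equivalence of total spaces covering a homotopy equivalence of bases. The latter is $\pi\circ j'$ by construction. For the former, both $\d W$ and $\d W'$ are $S^{q-1}$-fibrations over homotopy equivalent bases, so by the five-lemma applied to the long exact sequences in homotopy, it suffices to check the induced map on fibers is a homotopy equivalence $S^{q-1}\to S^{q-1}$. This fiber-wise verification is the main obstacle: the fiber of $\d W'\to M'$ over $y\in M'$ is a small sphere in $U$ representing the local link of $M'$ at $y$, and its image under $\Phi$ lies in the fiber of $\d W\to M$ over $\pi(y)$, representing the local link of $M$ at $\pi(y)$. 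Since both spheres encode the intrinsic codimension $q$ link structure of $U$ near corresponding points, a local {\scshape pl} argument in a small ball around $y$ identifies the two links up to homotopy.
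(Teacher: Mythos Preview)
Your construction of $\Phi$ is sensible, and the identification $U\setminus M\simeq\d W$ is correct (though your justification is backwards: the collapse $W\searrow M$ moves points \emph{toward} $M$, not toward $\d W$; the correct reason is that a regular neighborhood is {\scshape pl} homeomorphic to the mapping cylinder of $\d W\to M$, so $W\setminus M\cong\d W\times[0,1)$). The real gap is the final paragraph.

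Your ``local {\scshape pl} argument'' does not work. The retraction $\rho\co U\setminus M\to\d W$ is a \emph{global} map, and the fiber of $\d W'\to M'$ over $y$ is not literally a small link sphere of $M'$ at $y$: it is a subset of $\d W'$, which sits near the boundary of $W'$, far from $y$. Even if you represent the fiber class by a small link sphere $S_y\subset W'\setminus M'$ via the mapping-cylinder structure, there is no local control over $\rho(S_y)$; indeed $S_y$ bounds a normal $q$-disk $D_y$ meeting $M'$ once, but $D_y$ may miss $M$ entirely, in which case $S_y$ is null-homotopic in $U\setminus M$ and your five-lemma input collapses. What actually pins down the degree is \emph{global} information: one must know that the locally-finite homology classes $[M]$ and $[M']$ agree (up to sign) in $H_m^{\mathrm{lf}}(U)$, so that any bounding disk for $S_y$ has the same algebraic intersection with $M$ as with $M'$. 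That is a genuine argument you have not supplied.

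The paper sidesteps this by invoking~\cite[Proposition 4.1]{BKS11}: $U$ is exhausted by a nested family of regular neighborhoods of $M$ (collar pushes of $\d W$) and likewise by a nested family for $M'$; interlacing the two families directly produces the fiber homotopy equivalence without any fiberwise degree computation. That approach is more robust and avoids the linking-number bookkeeping. If you want to rescue your route, the cleanest fix is to drop the fiber computation entirely and instead show that the inclusion $\d W'_0\hookrightarrow U\setminus M$ is a homotopy equivalence of total spaces (then Dold's theorem gives the fiber homotopy equivalence); but proving that inclusion is an equivalence essentially forces you back to the interlaced-neighborhoods argument.
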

\begin{proof}
This a minor modification of the same assertion for vector bundles 
proved in~\cite[Proposition 4.1]{BKS11}. The only change is to replace
smooth tubular neighborhoods by regular neighborhoods, and the 
point is that $U$ is the union of a countable family of nested regular neighborhoods
of $M$ obtained by removing small open collars of $\d W$ in $W$,
and the same is true for $M^\prime$, $W^\prime$. 
\end{proof}

\begin{prop} 
\label{prop: engulf}
Let $M$, $M^\prime$ be closed $m$-manifolds embedded 
into an open manifold $V$ such that there are deformation retractions
$\pi\co V\to M$ and $V\to M^\prime$.
\begin{itemize}
\item[\textup{(1)}]
If $\dim(V)\ge m+3$,
then there is a fiber homotopy equivalence of spherical fibrations 
of the normal block bundles to $M$, $M^\prime$ that covers
$\pi\vert_{M^\prime}$.
\item[\textup{(2)}]
If $M$, $M^\prime$, $V$ are oriented and $\pi\vert_{M^\prime}\co M^\prime\to M$
orientation-preserving, then $\pi\vert_{M^\prime}$  
preserves the normal Euler classes to $M$ and $M^\prime$ in $V$. 
\item[\textup{(3)}]
If $\pi\vert_{M^\prime}$ maps $p(\nu_M)$ to $p(\nu_{M^\prime})$,
then $\pi\vert_{M^\prime}$ preserves the normal rational
Pontryagin classes to $M$ and $M^\prime$ in $V$.
\end{itemize}
\end{prop}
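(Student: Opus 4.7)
The plan is to deduce (1) from Lemma~\ref{lem: sphere bundles fhe} by producing thickenings of $M$ and $M^\prime$ with a common interior, and then derive (2) and (3) from the resulting fiber homotopy equivalence combined with the Whitney sum formula for rational Pontryagin classes recalled earlier in this section.

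For (1), the codimension hypothesis $\dim V\ge m+3$ together with Zeeman's unknotting theorem makes $M$ and $M^\prime$ locally flat in $V$, so each has a compact regular neighborhood in $V$ that is a block bundle with fiber $I^q$. My goal is to select compact regular neighborhoods $W$ of $M$ and $W^\prime$ of $M^\prime$ in $V$ sharing the same interior, after which Lemma~\ref{lem: sphere bundles fhe} directly supplies the required fiber homotopy equivalence covering $\pi\vert_{M^\prime}$. I would arrange this by an iterative engulfing argument based on the deformation retractions $V\to M$ and $V\to M^\prime$: either retraction permits ambient-isotoping any prescribed compact subpolyhedron of $V$ into any preassigned regular neighborhood of the target spine, and alternating the two engulfings produces nested regular neighborhoods of $M$ and of $M^\prime$ whose interiors exhaust a common open set $U$; uniqueness of regular neighborhoods up to ambient isotopy then identifies $U$ as the interior of compatible regular neighborhoods of both $M$ and $M^\prime$. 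I expect this engulfing step to be the main technical obstacle.

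For (2), the fiber homotopy equivalence from (1) exhibits the spherical fibration of $M^\prime$ in $V$ as the pullback along $\pi\vert_{M^\prime}$ of the spherical fibration of $M$ in $V$. Since $M$, $M^\prime$, $V$ are oriented and $\pi\vert_{M^\prime}$ is orientation-preserving, this equivalence may be chosen to respect orientations, and the Euler class of an oriented $S^{q-1}$-fibration is natural under oriented pullbacks; because the normal Euler class of a codimension $q\ge 3$ thickening coincides with the Euler class of its spherical fibration (as recalled in Section~\ref{sec: thick}), this gives $(\pi\vert_{M^\prime})^* e(M\subset V)=e(M^\prime\subset V)$.

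Part (3) is purely algebraic, given the formula $p(\iota)=p(\nu)\cup\iota^* p(\tau_V)$ for the normal Pontryagin class of an embedding, recalled earlier. Writing $\iota_M$, $\iota_{M^\prime}$ for the two inclusions, $\iota_M\circ\pi\simeq\id_V$ implies $\iota_M\circ\pi\vert_{M^\prime}=\iota_M\circ\pi\circ\iota_{M^\prime}\simeq\iota_{M^\prime}$, so
$(\pi\vert_{M^\prime})^* p(\iota_M)=(\pi\vert_{M^\prime})^* p(\nu_M)\cup(\iota_M\circ\pi\vert_{M^\prime})^* p(\tau_V)=p(\nu_{M^\prime})\cup\iota_{M^\prime}^* p(\tau_V)=p(\iota_{M^\prime})$,
where the middle equality uses the hypothesis $(\pi\vert_{M^\prime})^* p(\nu_M)=p(\nu_{M^\prime})$.
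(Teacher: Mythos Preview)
Your argument for (3) is correct and matches the paper's. Your outline for (1) is in the right spirit, but the paper does it more economically: rather than an alternating tower of engulfings, it takes a single regular neighborhood $R$ of $M$, uses one application of Stallings engulfing to isotope $M^\prime$ into $\mathrm{Int}(R)$, and then observes that the region between $\d R$ and the boundary of a regular neighborhood $W$ of the engulfed copy is an h-cobordism, forcing $\mathrm{Int}(R)=\mathrm{Int}(W)$. This delivers two \emph{compact} thickenings with literally the same interior, which is exactly what Lemma~\ref{lem: sphere bundles fhe} asks for. Your iterative scheme would produce a common open set exhausted by nested regular neighborhoods of each spine, but not obviously the interior of a single compact thickening, so you would either need to tweak the lemma or finish with the same h-cobordism step anyway.

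The real gap is in (2). You deduce it from the fiber homotopy equivalence of (1), but (1) carries the hypothesis $\dim(V)\ge m+3$, while (2) is stated---and is used later in the paper for codimension~$2$---without any codimension restriction. The paper avoids this by proving (2) directly from the description of the Euler class via diagram~(\ref{diagram: euler class open}): since $\pi$ is homotopic to $\mathrm{id}_V$, the inclusions $M\hookrightarrow V$ and $M^\prime\hookrightarrow V$ push $[M]$ and $[M^\prime]$ to the same class in $H_m^{\mathrm{lf}}(V)$ (using that $\pi\vert_{M^\prime}$ is orientation-preserving), and then Poincar\'e duality followed by restriction gives the two normal Euler classes, which are therefore related by $(\pi\vert_{M^\prime})^*$. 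This argument is parallel to your proof of (3) and needs no spherical fibration at all; you should replace your proof of (2) with it.
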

\begin{proof} (1) We can assume $\dim(V)\ge 5$. 
If $R$ is a regular neighborhood of $M$,
then the inclusion 
$R\to V$ is a homotopy equivalence. Since $m\ge 3$, 
Stallings engulfing~\cite[Theorem 3.1.3]{DV-book}
gives an ambient isotopy that moves $M$ to a submanifold
$L\subset\mathrm{Int}(R)$. 
If $W$ is a regular neighborhood of $L$ in $\mathrm{Int}(R)$,
then the inclusion $W\to\mathrm{Int}(R)$ is a homotopy equivalence, and hence
$R\setminus\mathrm{Int}(W)$ is an h-cobordism~\cite[p.213]{Sie-open-collar},
so $\mathrm{Int}(R)\setminus\mathrm{Int}(W)$ 
is homeomorphic to $[0,1)\times\d W$
rel $\{0\}\times\d W$. Hence $R$, $W$ have homeomorphic interiors. 
Lemma~\ref{lem: sphere bundles fhe} gives desired 
fiber homotopy equivalence of the
associated the sphere bundles. 

(2) Since $\pi$ is homotopic to the identity of $V$, and $\pi\vert_{M^\prime}$
is orientation-preserving, the images of the fundamental classes
$[M]$, $[M^\prime]$ in $H_n^{\mathrm{lf}}(V)$ are equal, and after taking the Poincare dual
and restricting to $M$, $M^\prime$ we get Euler classes
that are taken to each other by $\pi\vert_{M^\prime}$.

(3) Since $\pi$ is homotopic to the identity of $V$, 
the map $\pi\vert_{M^\prime}$ takes $\tau_V\vert_M$ to $\tau_V\vert_{M^\prime}$.
By assumption it also takes $p(\nu_M)$ to $p(\nu_{M^\prime})$,
and hence sends the normal rational Pontryagin class to $M$ to
the normal rational Pontryagin class to $M^\prime$. 
\end{proof}

\begin{rmk}
\label{rmk: borel}
We shall apply (3) when $M$, $M^\prime$ are closed locally CAT$(-1)$ manifolds, 
in which case any homotopy equivalence
preserves the rational Pontryagin class of the stable normal microbundle. 
(If $\dim(M)\ge 5$ this follows from
the solution of the Borel conjecture for CAT$(0)$ groups~\cite{BL}
and topological invariance of rational Pontryagin classes.
If $\dim(M)=4$, one can either apply the argument of the previous sentence
to the product of the homotopy equivalence $M\to M^\prime$
and the identity map of $S^1$, or more classically, note that
the only relevant class is $p_1$, and 
if $M$ is oriented, $p_1(\tau_M)$ is proportional to the signature
$\s(M)$ which is an oriented homotopy invariant, and 
the non-orientable case easily follows by passing to orientable
$2$-fold covers using that finitely-sheeted covering maps 
are injective on rational cohomology). 
\end{rmk}

\begin{rmk}
In Section~\ref{sec: class block bundles} we shall define for every odd 
$q\ge 3$ a characteristic class $o_q$ of oriented spherical fibrations. 
If in Proposition~\ref{prop: engulf}(1)
the fiber homotopy equivalence preserves orientation of the fiber,
then $\pi\vert_{M^\prime}$ preserves $o_q$.
\end{rmk}

If $W$ is a 
thickening of $M$ and $\pi\co\overbar{W}\to W$ is a finitely-sheeted
covering map, then $\overbar{W}$ is a thickening of $\overbar{M}:=p^{-1}(M)$,
e.g. because the $\pi$-preimage of 
the star of $M$ in a second-derived subdivision of $W$ is clearly
the star of $\overbar{M}$ in a second-derived subdivision of $\overbar{W}$.

\begin{prop}
\label{prop: finite cover}
The classes $e$, $p$, $o_q$ of $\overbar{W}$
are $\pi^*$-images of the corresponding classes 
$e$, $p$, $o_q$ of $W$.
\end{prop}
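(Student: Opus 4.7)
The plan is to reduce the proposition to naturality of characteristic classes under pullback.

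Step~1 is to identify $\overbar{W}$ as an honest pullback of $W$. Since $\pi$ is a finitely-sheeted covering, hence a local {\scshape pl} homeomorphism, if $W$ is realized as the star of $M$ in a second-derived subdivision (as recalled just before the statement), then $\overbar{W}$ is the star of $\overbar{M}$ in the lifted subdivision. Consequently $\pi$ restricts to covering maps of pairs $(\overbar{W},\d\overbar{W})\to (W,\d W)$ and $\overbar{M}\to M$, and the sphere fibration $\d\overbar{W}\to\overbar{M}$ is fiber-homotopy-equivalent to the pullback of $\d W\to M$ along $\pi\vert_{\overbar{M}}$. In the codimension $2$ case, the analogous statement for $W^\prime=W\times I$ follows by crossing the covering with $\id_I$.

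Step~2 handles $e$ and $o_q$. The orientation of $W$ is the Thom class of its sphere fibration (or, for $q=2$, of the sphere fibration of $W^\prime$), and Thom classes are natural under pullback of spherical fibrations, so $\pi^*$ sends the orientation of $W$ to the orientation of $\overbar{W}$. The commutative naturality square of inclusion-induced maps
\[
\begin{CD}
H^q(W,\d W) @>>> H^q(M)\\
@VV\pi^*V @VV(\pi\vert_{\overbar{M}})^*V\\
H^q(\overbar{W},\d\overbar{W}) @>>> H^q(\overbar{M})
\end{CD}
\]
then yields $(\pi\vert_{\overbar{M}})^*e(W)=e(\overbar{W})$. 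The same pullback description of oriented sphere fibrations, combined with the fact that $o_q$ is by construction a universal characteristic class of oriented spherical fibrations, immediately gives $(\pi\vert_{\overbar{M}})^*o_q(W)=o_q(\overbar{W})$.

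Step~3 handles $p$. Since $\pi$ is a local homeomorphism, it pulls the stable tangent microbundle of $W$ back to that of $\overbar{W}$, i.e.\ $\pi^*\tau_W=\tau_{\overbar{W}}$, and similarly $(\pi\vert_{\overbar{M}})^*\nu_M=\nu_{\overbar{M}}$. Because $p$ was defined via the Whitney sum formula $p(W)=p(\nu_M\oplus \iota^*\tau_W)$ with $\iota\co M\to W$ the inclusion, naturality of rational Pontryagin classes of microbundles gives $(\pi\vert_{\overbar{M}})^*p(W)=p(\overbar{W})$.

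The only real content of the argument is Step~1, identifying the regular-neighborhood data, sphere fibration, and microbundles on $\overbar{W}$ as literal pullbacks along $\pi$; after that, everything is formal naturality, and I do not anticipate any significant obstacle.
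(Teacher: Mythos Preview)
Your argument is correct and matches the paper's approach: both reduce to naturality of the Thom class and of tangent/normal microbundles under the covering map, with your Step~1 being exactly the observation recorded in the paragraph preceding the proposition. The paper's only organizational difference is that for $q\ge 3$ it invokes the block-bundle pullback of \cite{RS-I} to identify $\overbar{W}$ with $\pi^*W$ directly (handling all classes at once), while for $q\le 2$ it argues for $e$ and $p$ separately just as you do; your uniform class-by-class treatment via the sphere fibration is equally valid.
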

\begin{proof}
If $W$ is a block bundle over $M$, then the definition of a pullback 
in~\cite{RS-I} easily implies that the pullback $\pi^*W$ of $W$ via 
$p\co \overbar{M}\to M$
is a covering space of $W$ that extends $p$, and 
uniqueness of the covering space shows that $\overbar{W}$ and $\pi^*W$
are equivalent, which proves the claim for $q\ge 3$. 

The pullback of a codimension $q<3$ thickening is defined in
a less explicit way, see~\cite{CS76}, and here we merely observe
that $e$ and $p$ pullback via $\pi$. 
For the Euler class the claim follows by
observing that the Thom class in $H^q(W, W\setminus M)$
visibly maps to the Thom class in 
$H^q(\overbar{W}, \overbar{W}\setminus \overbar{M})$
because it is represented by a $q$-disk transverse to $M$
at a locally flat point. 
For the Pontryagin class 
the claim holds because covering maps
pullback tangent microbundles. 
\end{proof}

\section{Classifying spaces for block bundles and spherical fibrations}
\label{sec: class block bundles}

This section reviews some results 
in~\cite{Mil, RS-I, RS-II, RS-III, CS76, MadMil-book}.

Let $G_q$ be the space $G_q$ of homotopy self-equivalences of $S^{q-1}$. Then
$BG_q$ is the classifying space for fibrations with fiber $S^{q-1}$~\cite[Chapter 3A]{MadMil-book}.

Let $B\widetilde{PL}_q$ be a classifying space 
for block bundles with fiber $I^q$, 
it is a locally finite simplicial complex~\cite[Section 2]{RS-I}.

Taking boundary of a block bundle defines a canonical map
of $B\widetilde{PL}_q$ to $B\tilde{G}_q$, the block version of
$BG_q$ which deformation retracts onto $BG_q$~\cite[Corollaries 5.8-5.9]{RS-I}.

If $q\le 2$, the canonical maps $BO_q\to B\widetilde{PL}_q\to B\tilde{G}_q$ are 
homotopy equivalences~\cite[p.127]{Wal-book}, which is why {\em for the rest of this section we
assume $q\ge 3$}.

The set of equivalence classes of thickenings of $M$ of 
codimension $q\ge 3$ is bijective to the set of homotopy classes of maps
from $M$ into $B\widetilde{PL}_q$~\cite[Corollary 4.6]{RS-I}. 

We wish to enumerate oriented block bundles up to finite ambiguity in terms of their characteristic classes
in the same way as an oriented vector bundle is determined up to finite ambiguity by its Euler 
and Pontryagin classes. (The orientability assumption is to avoid twisted Euler class). 
To this end we consider the classifying space 
$BS\widetilde{PL}_q$ for oriented thickenings of codimension $q\ge 3$. 
Its basic properties are folklore, and not treated in the literature, so for completeness
we derive them below. 
Define $BS\widetilde{PL}_q$ as the universal cover of $B\widetilde{PL}_q$;
as we show below the covering projection is $2$-fold, and coincides with the 
forget orientation map. 

Following~\cite[Chapter 3A]{MadMil-book} we review the properties of $SG_{q}$, 
the space of degree one homotopy self-equivalences of $S^{q-1}$, where we assume $q\ge 3$:
\begin{itemize}
\item The evaluation map $SG_{q}\to S^{q-1}$ is a fibration 
whose fiber 
can be identified with a path-component of $\Omega^{q-1} S^{q-1}$.
All components of $\Omega^{q-1} S^{q-1}$ are homotopy equivalent, and
for any choice of a basepoint $\pi_i(\Omega^{q-1} S^{q-1})\cong\pi_{i+q-1}(S^{q-1})$.
\item 
$SG_q$ is the identity component in the space $G_q$ of homotopy self-equivalences of 
$S^{q-1}$; the space $G_q$ has two path-components.
\end{itemize}

The space $BSG_q$ is the classifying space for oriented fibrations
with fiber $S^{q-1}$. Since $SG_q$ is path-connected, $BSG_q$ is simply-connected.
The fiberwise join with the trivial $S^0$ bundle defines the stabilization map
$BSG_q\to BSG_{q+1}$ with direct limit $BSG$, which is a simply-connected
space with finite homotopy groups.

Similarly, $BG_q$ is the classifying space for fibrations with fiber $S^{q-1}$,
and $BG$ is the direct limit of $BSG_q$ under stabilization. 
The spaces $BG_q$, $BG$ are path-connected, and 
since $G_q$ has two path-components, the stabilization indices
an isomorphism $\pi_1(BG_q)\cong\pi_1(BG)\cong\Z_2$.

Product with $I$ defines the stabilization map
$B\widetilde{PL}_q\to B\widetilde{PL}_{q+1}$ with direct limit $B\widetilde{PL}$,
which by~\cite[Corollary 5.5]{RS-III} is homotopy equivalent
to $BPL$, the classifying space for stable PL microbundles.

To unclutter notations we sometimes suppress differences between
classifying spaces and their homotopy equivalent block versions, e.g.,
$B\tilde{G}_q$, $B\widetilde{PL}$ and $BG_q$, $BPL$, respectively.

The homotopy fiber $PL/O$ of the canonical map $G/O\to G/PL$  
is $6$-connected~\cite[Remark 4.21]{MadMil-book} and
$G/O$ is simply-connected~\cite[p.43]{MadMil-book}.
Thus $\pi_1(G/PL)=0$, or equivalently, the pair
$(B\widetilde{G}, B\widetilde{PL})$ is $2$-connected.
It follows that $BPL\to BG$ is a $\pi_1$-isomorphism.

For $q\ge 3$ it is proved in~\cite[Theorem 1.10]{RS-III} that 
the inclusion 
\[
(B\widetilde{G}_{q}, B\widetilde{PL}_q)\to (B\widetilde{G}_{q+1}, B\widetilde{PL}_{q+1})
\] 
is an isomorphism on homotopy groups, and then
the previous paragraph gives \mbox{$2$-connectedness} of the pair 
$(B\widetilde{G}_{q}, B\widetilde{PL}_q)$ for $q\ge 3$.
Hence every map in the square below induces a $\pi_1$-isomorphism
\[
\xymatrix{
B\widetilde{PL}_q\ar[r]\ar[d] & B\widetilde{G}_q \ar[d]
\\
B\widetilde{PL}\ar[r] & B\widetilde{G} 
}
\] 
where all fundamental groups have order two, 
and therefore, we get the corresponding square of the $2$-fold (universal) covers:
\begin{equation*}
\xymatrix{
BS\widetilde{PL}_q\ar[r]\ar[d] & BS\widetilde{G}_q \ar[d]
\\
BS\widetilde{PL}\ar[r] &BSG 
}
\label{diagram: BSPL_q}
\end{equation*} 
where $BS\widetilde{PL}$ is homotopy equivalent to the classifying space $BSPL$
of stable oriented PL bundles.

The canonical map $BSO\to BSPL$ is a rational homotopy equivalence,
as a map of simply-connected spaces whose homotopy fiber $PL/O$ has finite homotopy groups. 

A thickening of $M$ 
of codimension $q\ge 3$ is orientable if and only if its classifying map 
$f\co M\to B\widetilde{PL}_q$
postcomposed with $B\widetilde{PL}_q\to B\widetilde{G}_q$ lifts to $BS\widetilde{G}_q$.
Thus the thickening is orientable if and only if $f$ lifts to 
$BS\widetilde{PL}_q$.

For $q\ge 3$ it is proved in~\cite[Theorem 1.11]{RS-III} that 
the above map 
\[
(B\widetilde{PL}_{q+1}, B\widetilde{PL}_q)\to (B\widetilde{G}_{q+1}, B\widetilde{G}_q)
\] 
is an isomorphism on homotopy groups, and hence the same is true for
the corresponding map
$(BS\widetilde{PL}_{q+1}, BS\widetilde{PL}_q)\to (BS\widetilde{G}_{q+1}, BS\widetilde{G}_q)$
of $2$-fold covers.
In other words, by e.g.~\cite[Remark 3.3.19]{MunVol-book}, the square 
\begin{equation*}
\xymatrix{
BS\widetilde{PL}_q\ar[r]\ar[d] & BS\widetilde{G}_q \ar[d]
\\
BS\widetilde{PL}\ar[r] &BSG 
}
\end{equation*} 
is homotopy Cartesian. 
For Cartesian squares~\cite[Example 3.3.14]{MunVol-book}
the arrows from the upper left corner give a fibration
$BS\widetilde{PL}_q\to BSPL\times BSG_q$ whose fiber $\Omega BSG$
has finite homotopy groups~\cite[Corollary 3.8]{MadMil-book}, so that
the fibration is a rational homotopy equivalence.

The classifying spaces $BS\widetilde{PL}_q$, $BSPL$, $BSG_q$ are simply-connected with finitely generated homotopy groups 
(as can be seen from the long exact homotopy sequences of the above fibrations together with the fact that
$\pi_i(BSO)$ and $\pi_i(S^m)$ are finitely generated for all $m$, $i$). 
Hence $BS\widetilde{PL}_q$, $BSPL$, $BSG_q$ have
finitely generated homology groups
(by the Hurewicz theorem for the Serre class of finitely generated abelian groups).

In what follows we need to compare the cohomology of these classifying spaces
with coefficients in $\Z$, $\Z\!\left[\frac{1}{2}\right]$, $\Q$, for which
we recall the following.

\begin{lem} 
\label{lem: torsion coeff}
If $X$ be a space with finitely generated homology groups, then
for any short exact sequence $1\to A\to B\to G\to 1$ 
with torsion group $G$ the homomorphism $H^i(X;A)\to H^i(X;B)$ in the corresponding 
long exact cohomology sequence becomes an isomorphism after tensoring with $\Q$.
\end{lem}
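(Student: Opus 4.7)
The plan is to use the long exact sequence in cohomology and show that the cohomology of $X$ with coefficients in the torsion group $G$ becomes trivial after tensoring with $\Q$.

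First, the short exact sequence $0\to A\to B\to G\to 0$ gives the long exact sequence
\[
\cdots\to H^{i-1}(X;G)\to H^i(X;A)\to H^i(X;B)\to H^i(X;G)\to H^{i+1}(X;A)\to\cdots
\]
Because $\Q$ is flat over $\Z$, tensoring this sequence with $\Q$ preserves exactness. Hence it suffices to prove that $H^i(X;G)\otimes\Q=0$ for every $i$: this forces $H^i(X;A)\otimes\Q\to H^i(X;B)\otimes\Q$ to be both surjective (since the right neighbor vanishes) and injective (since the image of the left neighbor vanishes).

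Second, I would establish the vanishing via the universal coefficient theorem. For each $i$ there is a short exact sequence
\[
0\to \mathrm{Ext}(H_{i-1}(X),G)\to H^i(X;G)\to \mathrm{Hom}(H_i(X),G)\to 0.
\]
By hypothesis, each $H_j(X)$ is finitely generated, so we may write $H_j(X)\cong \Z^{r_j}\oplus T_j$ with $T_j$ finite. Then $\mathrm{Hom}(H_i(X),G)\cong G^{r_i}\oplus\mathrm{Hom}(T_i,G)$ and $\mathrm{Ext}(H_{i-1}(X),G)\cong\mathrm{Ext}(T_{i-1},G)$, since $\mathrm{Ext}(\Z^{r_{i-1}},G)=0$. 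Each of $G^{r_i}$, $\mathrm{Hom}(T_i,G)$, and $\mathrm{Ext}(T_{i-1},G)$ is a torsion abelian group: the first because $G$ is torsion, and the latter two because $T_i$ and $T_{i-1}$ are finite (so $\mathrm{Hom}(T_i,G)$ is torsion of exponent dividing $|T_i|$, and likewise for Ext, which is computed from quotients $G/nG$). Thus $H^i(X;G)$ sits in a short exact sequence of torsion groups, and is therefore torsion, so $H^i(X;G)\otimes\Q=0$.

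Combining the two steps finishes the proof. There is no substantive obstacle; the only point to be careful about is distinguishing the $\Z^{r}$ and torsion summands in $H_*(X)$ when applying UCT, so as to see that each resulting piece is torsion once the coefficient group $G$ is torsion.
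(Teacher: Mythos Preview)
Your proof is correct and follows essentially the same approach as the paper: both reduce, via the long exact cohomology sequence associated to $0\to A\to B\to G\to 0$, to showing that $H^i(X;G)$ is a torsion group, and both verify this using the universal coefficient theorem together with the finite generation of $H_*(X)$. Your treatment is slightly more explicit in decomposing $H_j(X)$ into free and finite parts, whereas the paper observes directly that $\mathrm{Ext}(H_{i-1}(X),G)$ is torsion for \emph{any} $G$ once $H_{i-1}(X)$ is finitely generated; but the substance is the same.
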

\begin{proof}
For any abelian group $G$
the kernel $\mathrm{Ext}(H_{i-1}(X), G)$
of the natural surjection $H^i(X; G)\to \mathrm{Hom}(H_i(X), G)$ is torsion,
because $H_{i-1}(X)$ is finitely generated. 
Since $H_i(X)$ is finitely generated and $G$ is a torsion, the group
$\mathrm{Hom}(H_i(X), G)$ is torsion. Thus $H^i(X; G)$ is torsion. 
The exactness  of the long exact cohomology sequence corresponding
to $1\to A\to B\to G\to 1$ finishes the proof.
\end{proof}

The cohomology algebra $H^*(BSPL;\Q)$ is a polynomial algebra over $\Q$ on 
the {\scshape pl}  Pontryagin classes $p_i$, which 
correspond to the usual Pontryagin classes under
the rational homotopy equivalence $BSO\to BPSL$~\cite[4.20]{MadMil-book}.
The {\scshape pl} 
Pontryagin classes are generally not integral.
The discussion after~\cite[Theorem 11.14]{MadMil-book} gives  
explicit classes $2^iR_{4i}\in H^{4i}(BSPL)/\text{Tors}$, where $i\in \Z$, $i>0$, 
whose representatives in $\rho_i\in H^{4i}(BSPL)$ generate 
$H^*(BSPL;\L)$ as a polynomial algebra over $\L:=\Z\!\left[\frac{1}{2}\right]$. 
By Lemma~\ref{lem: torsion coeff} the coefficient homomorphisms 
\begin{equation}
\label{form: composite}
H^*(BSPL;\Z)\to H^*(BSPL;\L)\to H^*(BSPL;\Q)
\end{equation}
become isomorphisms after tensoring with $\Q$.
Then the images of $\rho_i$ under the composite (\ref{form: composite}) 
generate $H^*(BSPL;\Q)$ as a polynomial algebra over $\Q$.

We keep the notations $\rho_i$ and $p_i$ for their images in
$H^*(BS\widetilde{PL}_q)$ under the stabilization $BS\widetilde{PL}_q\to BSPL$.

Milnor in~\cite{Mil} showed that $H^*(BSG_q;\Q)$ is a 
polynomial algebra over $\Q$ on a single generator $o_q$
whose degree $|o_q|$ equals $q$ is even, and $2q-2$ if $q$ is odd.
By Lemma~\ref{lem: torsion coeff} the coefficient homomorphism 
$H^*(BSG_q;\Z)\to H^*(BSG_q;\Q)$ becomes an isomorphism after tensoring with $\Q$.
Hence we can (and will) pick $o_{q}$ to be an integral class,
and if $q$ is even we normalize $o_q\in H^q(BSG_q;\Q)$ to be the 
integral Euler class of the universal spherical fibration under
the coefficient homomorphism $H^q(BSG_q)\to H^q(BSG_q;\Q)$.
For any $k\in\Z_{>0}$ we have:
\begin{enumerate}
\item
The canonical map $BSO_{2k+1}\to BSG_{2k+1}$ takes $o_{2k+1}$
to a nonzero multiple of the Pontryagin class $p_k$~\cite[pp.73--74]{Mil}. 
\item
The stabilization map $BSG_{2k}\to BSG_{2k+1}$ 
takes $o_{2k+1}$ to a nonzero rational multiple of $o_{2k}^2$~\cite[pp.73--74]{Mil}.
\item 
The stabilization map $BSG_{2k+1}\to BSG_{2k+2}$ takes $o_{2k+2}$ to zero.
(This is obvious by degree reasons except when $k=1$ which is excluded as follows: 
if the stabilization $\s\co BSG_{3}\to BSG_{4}$ is nonzero on $H^4(-;\Q)$, then 
$\s$ is a rational homotopy equivalence of rationally $3$-connected spaces, and hence 
$\s^*o_4$ is nonzero on some $f\co S^4\to BSG_{3}$,
so that the Euler class $o_4$ is nonzero on $\s\circ f$, 
which is a sphere bundle with a section).
Thus the double stabilization $BSG_{2k+1}\to BSG_{2k+3}$ 
takes $o_{2k+3}$ to zero.
\end{enumerate}

For $\xi\co M\to BS\widetilde{PL}_q$ let $S(\xi)$ be the spherical 
fibration obtained by composing $\xi$ with $BS\widetilde{PL}_q\to BSG_q$
and define $o_q(\xi):=o_q(S(\xi))$.  
It follows that (2) and (3) hold with $G$ replaced by $\widetilde{PL}$. 

The Euler class $e$ of an oriented thickening defined in
Section~\ref{sec: thick} is sent by
the coefficient homomorphism $H^q(M)\to H^q(M;\Q)$
to $o_q$ if $q$ is even, and to $0$ if $q$ is odd
because in this case $H^q(BSG_q;\Q)=0$.

The sequence $(o_q,\rho_1, \rho_2, \dots )$ defines the map 
\begin{equation}
\label{eq: pl tilde rhe}
BS\widetilde{PL}_q\longrightarrow \bm{K}_q:=K(\Z, |o_q|)\times\prod_{i\in\Z_{>0}} K(\Z, 4i).
\end{equation}
where the co-domain is topologized as a weak product~\cite[p.28]{Whi78}. The map
is a rational homotopy equivalence (because the rational
cohomology of the domain and the co-domain are 
polynomial algebras over $\Q$ and generators are sent to generators).

\begin{thm}
\label{thm: realization}
Given a closed manifold $M$, an integer $q\ge 3$, and a sequence of cohomology classes 
$(\b_i)_{i\in\Z_{\ge 0}}$ with $\b_0\in H^{|o_q|}(M)$ and $\b_i\in H^{4i}(M)$ 
for $i\in\Z_{>0}$, there exists an $n\in\Z_{>0}$ and an oriented codimension $q$ 
thickening $W$ of $M$ with $o_q(W)=n\b_0$ and $\rho_i(W)=n\b_i$.
\end{thm}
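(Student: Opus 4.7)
The tuple $(\beta_i)_{i\ge 0}$, with only finitely many nonzero entries since $M$ is finite-dimensional, assembles into a single map $\phi\co M \to \bm{K}_q$ whose $i$th coordinate represents $\beta_i$. Oriented codimension-$q$ thickenings of $M$ (for $q\ge 3$) are classified by homotopy classes of maps $M \to BS\widetilde{PL}_q$, and the characteristic classes $o_q(W), \rho_i(W)$ of a thickening $W$ classified by $\xi$ are precisely the coordinates of $f\circ\xi$, where $f$ is the map in \eqref{eq: pl tilde rhe}. Thus the task reduces to finding $n\in\Z_{>0}$ and a lift $\tilde\phi\co M \to BS\widetilde{PL}_q$ with $f\circ \tilde\phi \simeq [n]\circ \phi$, where $[n]\co \bm{K}_q \to \bm{K}_q$ denotes multiplication by $n$ using the H-space structure on the product of Eilenberg-MacLane spaces; this multiplies each representing cohomology class by $n$.

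The key input is that $f$ is a rational homotopy equivalence between simply-connected spaces with finitely generated homotopy groups in every dimension, so the homotopy fiber $F$ of $f$ has $\pi_k(F)\otimes \Q = 0$ and $\pi_k(F)$ finitely generated, hence finite, for every $k$. I would build $\tilde\phi$ using the Moore-Postnikov factorization of $f$, whose stages are principal $K(\pi_k(F),k)$-fibrations, and lift $\phi$ one stage at a time. The obstruction to extending a partial lift across the $k$th stage lives in $H^{k+1}(M;\pi_k(F))$, which is finite. Only finitely many stages are relevant because $M$ is finite-dimensional, so one may choose $n$ to be a common multiple of the exponents of the $\pi_k(F)$ for $k\le\dim M$. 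Compatibly rescaling the partial lifts using the H-space structure on $\bm{K}_q$ multiplies each obstruction class by $n$, and hence kills it. The resulting lift $\tilde\phi$ classifies a thickening $W$ with $o_q(W)=n\beta_0$ and $\rho_i(W)=n\beta_i$, as required.

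The main obstacle is the compatibility claim that rescaling the target map via the H-space structure really does multiply each Moore-Postnikov obstruction by $n$, since these obstructions depend sequentially on earlier lifts. A cleaner route avoids this by passing to rational homotopy: because $H^*(BS\widetilde{PL}_q;\Q)$ is polynomial on generators in the even degrees $|o_q|$ and $4i$, the rationalization $(BS\widetilde{PL}_q)_\Q$ is a product of $K(\Q,\cdot)$'s, so $f_\Q$ is a homotopy equivalence of rational H-spaces. The rationalization $\phi_\Q$ then admits a canonical lift to $(BS\widetilde{PL}_q)_\Q$, and clearing denominators in this lift by multiplying by a suitable $n>0$ produces the desired integer lift $\tilde\phi$.
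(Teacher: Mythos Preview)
Your overall strategy is exactly the paper's: represent $(\beta_i)$ by $\phi\co M\to\bm K_q$ and show that some power $[n]\circ\phi$ lifts through the rational equivalence $BS\widetilde{PL}_q\to\bm K_q$. The paper packages precisely this step as Theorem~\ref{thm: prescribe LR}. You also correctly diagnose the weak point in your first route: once a partial lift $\ell_j\co M\to P_j$ into a Moore--Postnikov stage has been chosen, the H-space structure on $\bm K_q$ gives no self-map of $M$ or of $P_j$ by which to rescale $\ell_j$, so there is no reason the next obstruction $\ell_j^*\kappa_j\in H^{j+1}(M;\pi_j(F))$ should be a multiple of anything computed earlier.

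Your proposed bypass via rationalization does not close this gap. Rationalizing gives a map $M\to (BS\widetilde{PL}_q)_\Q$, but since $f_\Q$ is an equivalence this is nothing more than a map $M\to(\bm K_q)_\Q$; ``clearing denominators'' by multiplying by $n$ returns you to $\bm K_q$, which is where you started, and produces no map to $BS\widetilde{PL}_q$. The homotopy fiber of $BS\widetilde{PL}_q\to (BS\widetilde{PL}_q)_\Q$ has $\Q/\Z$-type homotopy and offers no finite obstructions to annihilate.

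The paper's fix (Appendix~\ref{app: Lifting and power maps}) is to run the obstruction theory with domain $K_J$ rather than $M$, where $K_J\subset\bm K_q$ is a finite subproduct containing $\phi(M)$. One lifts a power map $K_J\to K_J$ to $BS\widetilde{PL}_q$ stage by stage; because every partial lift now has domain $K_J$, one may precompose with a further power map of $K_J$ at each stage, and by naturality the new obstruction is the pullback of the old one under that power map. The lemma that makes this work is that for any finite abelian $A$ some power map annihilates $H^n(K_J;A)$ in all positive degrees (proved via \cite{HanQua} for a single $K(\Z,i)$ and a K\"unneth argument). Composing the resulting section $\sigma\co K_J\to BS\widetilde{PL}_q$ with $\phi$ then classifies the desired thickening.
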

\begin{proof}
Realize the sequence $(\b_i)$ as a map from $M$ into $\bm{K}_q$
and apply Theorem~\ref{thm: prescribe LR}
to lift the map into $BS\widetilde{PL}_q$.
\end{proof}

\begin{rmk}
\label{rmk: realize p_i}
The classes $\rho_i$ and the Pontryagin classes $p_j$ are related by universal polynomials,
so to some extent Theorem~\ref{thm: realization} lets us specify the 
Pontryagin classes, albeit in a non-explicit way. 
Still, if we insist that $p_i=0$ for $i\neq j$, then $\rho_i$ and
$p_i$ are proportional, and then $o_q$, $p_i$ can be prescribed
arbitrarily up to multiplicative constant. 
\end{rmk}

Since the fiber of (\ref{eq: pl tilde rhe}) is rationally contractible, 
standard obstruction theory arguments imply:

\begin{thm}
For a closed manifold $M$ and an integer $q\ge 3$, there are at most
finitely many equivalence classes of codimension $q$ thickenings of $M$ with
the same classes $o_q$, $p_i$, $i\in\Z_{>0}$.
\end{thm}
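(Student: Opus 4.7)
The plan is to translate the counting problem into a lifting problem for the rational homotopy equivalence (\ref{eq: pl tilde rhe}) and then bound the number of lifts via standard obstruction theory along a Moore--Postnikov tower whose fibers have finite homotopy groups.

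For $q\ge 3$, equivalence classes of oriented codimension $q$ thickenings of $M$ correspond bijectively to $[M, BS\widetilde{PL}_q]$, and each one records $(o_q,\rho_1,\rho_2,\dots)$ via the map in (\ref{eq: pl tilde rhe}). Since $\rho_i$ and the Pontryagin classes $p_j$ are related by universal polynomials (Remark \ref{rmk: realize p_i}), fixing $o_q$ and all $p_i$ determines the rational values of the $\rho_i$; two integral lifts of the same rational class differ by a torsion element, and the torsion subgroup of $H^{4i}(M;\Z)$ is finite because $M$ is a finite CW complex. Hence prescribing $o_q$ and all $p_i$ restricts the composite $g\co M \to \bm{K}_q$ to finitely many homotopy classes. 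Non-orientable thickenings reduce to the oriented case via the double cover $BS\widetilde{PL}_q \to B\widetilde{PL}_q$, contributing at most a factor of two.

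Next I would replace the map in (\ref{eq: pl tilde rhe}) by a fibration $\Phi$ with homotopy fiber $F$. Both its source and target are simply connected (in $\bm{K}_q$ every factor has degree $\ge 4$ when $q\ge 3$) and have finitely generated homotopy groups, as was established earlier in the paper. Because $\Phi$ is a rational equivalence, the long exact homotopy sequence forces each $\pi_n F$ to be finitely generated and rationally trivial, hence finite, and shows that $F$ is simply connected.

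For each of the finitely many admissible maps $g\co M \to \bm{K}_q$, I would count homotopy classes of lifts of $g$ along $\Phi$ by climbing the Moore--Postnikov tower of $\Phi$: the obstruction to extending a partial lift at stage $n$ lies in $H^{n+1}(M;\pi_n F)$, and when it vanishes the extensions form a torsor over $H^n(M;\pi_n F)$. Both groups are finite because $M$ is a finite CW complex and $\pi_n F$ is finite, and only finitely many stages with $n\le\dim(M)+1$ contribute. A product of finitely many finite sets remains finite, giving finitely many lifts per $g$. The main subtlety I anticipate is the integral-versus-rational bookkeeping between the $p_i$ and the $\rho_i$ in the first step, but since we only need finiteness, the finite ambiguity on the $\bm{K}_q$ side is harmless and is absorbed into the final count.
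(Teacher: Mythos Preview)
Your proposal is correct and follows essentially the same approach as the paper, which gives only a one-line justification (``the fiber of (\ref{eq: pl tilde rhe}) is rationally contractible, standard obstruction theory arguments imply\ldots''). You have spelled out exactly what that sentence means: the fiber has finite homotopy groups, so a Moore--Postnikov lifting argument bounds the number of lifts over a finite complex.

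One point you handle that the paper glosses over: the map (\ref{eq: pl tilde rhe}) is built from the integral classes $\rho_i$, while the statement is phrased in terms of the rational Pontryagin classes $p_i$. Your observation that fixing the $p_i$ pins down the $\rho_i$ modulo the finite torsion subgroup of $H^{4i}(M;\Z)$ is the correct bridge, and it is needed for the argument to go through as stated. Your aside on non-orientable thickenings is unnecessary and slightly off: the class $o_q$ is only defined once an orientation is chosen, so the statement is implicitly about oriented thickenings, and no reduction via the double cover is required.
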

 
Let us highlight some
differences between $BSO_q$ and $BS\widetilde{PL}_q$:
\begin{itemize}
\item 
The Pontryagin class $p_i$ in $H^*(BSO_q; \Q)$ vanishes $i>\frac{q}{2}$
while the {\scshape pl} Pontryagin class $p_i$ in 
$H^*(BS\widetilde{PL}_q; \Q)$ is nonzero for any $i$.
\item 
The relation $o_{2k}^2=p_k$ holds in $H^*(BSO_{2k})$ while
in $H^{4k}(BS\widetilde{PL}_q; \Q)$ the classes $o_{2k}^2$,
$p_k$ are linearly independent.
\item
The canonical map $BSO_{2k+1}\to BS\widetilde{PL}_{2k+1}$ takes 
$o_{2k+1}$ to a nonzero multiple of the Pontryagin class $p_k$.
Thus for a linear $(2k+1)$-disk bundle $p_k=0$ if and only if $o_{2k+1}=0$.
\end{itemize}

\section{Smoothable thickenings of codimension $\ge 3$}
\label{sec: Smoothable thickenings}

Theorem~\ref{thm: realization} allows us to construct thickenings whose characteristic classes
are prescribed up to a multiplicative constant. More work is required to produce thickenings
that are smoothable. By smoothing theory~\cite{HM74} a manifold $W$ is smoothable
if and only if its tangent microbundle $\tau_W\co W\to BPL$ lifts to $BO$.

\begin{thm} 
\label{thm: smoothable thick of codim >2}
If in Theorem~\ref{thm: realization} the manifold $M$ is smoothable,
then the integer $n$ can be chosen so that the thickening $W$ is smoothable.
\end{thm}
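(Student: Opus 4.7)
The plan is to realize the characteristic classes not in $BS\widetilde{PL}_q$ alone, but in a pullback classifier that simultaneously records a stable smoothing. By smoothing theory~\cite{HM74}, $W$ is smoothable iff the stable tangent microbundle $\tau_W$ lifts from $BPL$ to $BO$. Since $W$ deformation retracts onto $M$, the restriction $\tau_W|_M$ is stably $\tau_M\oplus\xi^{\mathrm{stab}}$, where $\xi\co M\to BS\widetilde{PL}_q$ is the classifying map of the thickening and $\xi^{\mathrm{stab}}$ is its stabilization to $BSPL$. Since $M$ is smoothable, $\tau_M$ already lifts to $BSO$, so $W$ is smoothable if and only if $\xi^{\mathrm{stab}}$ lifts to $BSO$.

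The key step is to form the homotopy pullback
\[
B\ :=\ BS\widetilde{PL}_q\ \times_{BSPL}\ BSO,
\]
so that a lift of $\xi$ to $B$ encodes both a thickening structure and a $BSO$-lift of its stabilization. The map $B\to BS\widetilde{PL}_q$ has homotopy fiber $PL/O$, which has finite homotopy groups; hence $B$ is simply-connected with finitely generated homotopy groups, and $B\to BS\widetilde{PL}_q$ is a rational homotopy equivalence. Composing with (\ref{eq: pl tilde rhe}) yields a rational homotopy equivalence $B\to\bm{K}_q$ under which the generators of $H^*(\bm{K}_q;\Q)$ pull back to the classes $o_q$ and $\rho_i$, now viewed on $B$. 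Applying Theorem~\ref{thm: prescribe LR} to this rational equivalence and to the map $M\to\bm{K}_q$ representing $(\b_0,\b_1,\dots)$ produces an integer $n$ and a lift $\tilde\xi\co M\to B$ whose image in $\bm{K}_q$ represents $(n\b_0,n\b_1,\dots)$. The projection of $\tilde\xi$ to $BS\widetilde{PL}_q$ classifies an oriented codimension $q$ thickening $W$ with $o_q(W)=n\b_0$ and $\rho_i(W)=n\b_i$, while the projection to $BSO$ provides a lift of $\xi^{\mathrm{stab}}$. Combined with the existing lift of $\tau_M$, this lifts $\tau_W$ to $BO$, so $W$ is smoothable.

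The main subtlety is that the realization mechanism of Theorem~\ref{thm: prescribe LR}, written for $BS\widetilde{PL}_q$ in the proof of Theorem~\ref{thm: realization}, must apply verbatim to $B$. This reduces to the formal hypotheses used there — simple connectivity, finite generation of homotopy groups, and a rational homotopy equivalence to the chosen Eilenberg-MacLane target via the classes $o_q$ and $\rho_i$ — all of which follow at once from the fibration $PL/O\to B\to BS\widetilde{PL}_q$ and the finiteness of $\pi_*(PL/O)$.
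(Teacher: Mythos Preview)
Your proof is correct and is essentially a repackaging of the paper's argument. The paper applies the two-sided version of Theorem~\ref{thm: prescribe LR} with $A_1=BS\widetilde{PL}_q$, $A_2=BSG_q\times BSO$, and $B=BSG_q\times BSPL$, obtaining maps $f_1,f_2$ with $\a_1\circ f_1\simeq\a_2\circ f_2$; but such a pair is precisely a map into the homotopy pullback $A_1\times_B A_2$, and since $\a_2$ is the identity on the $BSG_q$ factor this pullback is exactly your $BS\widetilde{PL}_q\times_{BSPL}BSO$. So the two arguments produce the same lift; you have simply formed the pullback explicitly and then invoked the one-sided case of Theorem~\ref{thm: prescribe LR} (the case noted in Remark after it, used for Theorem~\ref{thm: realization}). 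Your version is slightly more transparent conceptually, while the paper's version avoids introducing an auxiliary space and instead exercises the full statement of Theorem~\ref{thm: prescribe LR}, which was set up precisely for this purpose. The smoothing step at the end---deducing a $BO$-lift of $\tau_W$ from a $BSO$-lift of the stabilized block bundle together with smoothability of $M$---is identical in both.
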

\begin{proof}
Apply Theorem~\ref{thm: prescribe LR} for $X=M$ and
\vspace{5pt}\newline
$\phantom{\quad}$  $\a_1\co BS\widetilde{PL}_q\to BSG_q\times BSPL$ is 
the above rational homotopy equivalence,\vspace{3pt}\newline
$\phantom{\quad}$ $\a_2$ is the product of
the identity map of $BSG_q$ with $BSO\to BSPL$,\vspace{3pt}\newline 
$\phantom{\quad}$ $f\co M\to\bm{K}_q$ 
represents the homotopy class given by the sequence
$(\b_0,\b_1,\dots )$,\vspace{3pt}\newline
$\phantom{\quad}$ $\beta\co BSG_q\times BSPL\to \bm{K}_q$ 
represents the homotopy class given by 
$(o_q,\rho_1, \rho_2, \dots )$, \vspace{-7pt}\newline 
to get $f_1, f_2$ such that $\a_1\circ f_1$, $\a_2\circ f_2$
are homotopic. The homotopy class $[f_1]$ classifies a thickening $W$ 
whose stabilization is the composite of $\a_1\circ f_1$ with 
the coordinate projection to $BSPL$. By commutativity
the stabilization of $[f_1]$ lifts to $BSO$.
Thus the tangent microbundle $\tau_W$ of $W$ 
is such that $\tau_W\oplus\nu_M$ has a structure of a stable vector bundle,
and hence so does 
$\tau_W\oplus\nu_M\oplus\tau_M$ which is stably isomorphic to $\tau_W$.
Here $\nu_M$ is the stable normal bundle of $M$.
\end{proof}

\begin{proof}[Proof of Theorem~\ref{thm: non-smooth} for $q\ge 3$]
The construction depends on the parity of $q$.

{\bf $q$ is even}. Fix $k, l\in\Z$ with $l\ge 4k\ge 8$, set $q=2k$, 
and let $L$ be any smooth closed $l$-manifold such that 
$H^{2q}(L)$ contains an infinite order element.
By Theorem~\ref{thm: smoothable thick of codim >2} and 
Remark~\ref{rmk: realize p_i} there is a
 smoothable thickening $W$ of $L$ with $p_k\neq 0$ and 
$o_q=0=p_i$ for $i\neq k$.

{\bf $q$ is even}
Fix $k, l\in\Z$ with $l\ge 4k\ge 4$, set $q=2k+1$, and let $L$ 
be any smooth closed \mbox{$l$-manifold} such that $H^{4k}(L)$ 
contains an infinite order element. 
By Theorem~\ref{thm: smoothable thick of codim >2} and 
Remark~\ref{rmk: realize p_i} there is
a smoothable thickening $W$ of $L$ such that $o_q\neq 0=p_i$ for all $i$. 

Regardless of the parity of $q$ let $K$ be the double of $W$ along the boundary.
Apply Ontaneda's Riemannian hyperbolization to a smooth triangulation
of $K$ to get $\bm L\subset\bm K$ satisfying (i)-(ii).
Let $V$ be a covering space
of $\bm K$ corresponding to the image of $\pi_1(\bm L)$ in $\pi_1(\bm K)$, and
let $f$ denote a lift of the inclusion $\bm L\to\bm K$ to $V$. 
By Theorem~\ref{thm: pullback} and Section~\ref{sec: thick}
the classes $o_q$, $p_i$ of the regular neighborhood of $f(\bm L)$ in $V$ 
satisfy the same identities (depending on parity of $q$) as those of $W$,   
and hence so does the normal disk bundle of the smooth submanifold
that is a deformation retract of $V$. The existence
of such a smooth submanifold leads to
a contradiction as follows:

{\bf $q$ is even}.
Linear disk bundles satisfy $o_q^2=p_k$ which 
gives $0=o_q^2=p_k\neq 0$ in our case.

{\bf $q$ is odd}.
The map $BSO_q\to BS\widetilde{PL}_q$
sends the {\scshape pl} Pontryagin class to 
the usual Pontryagin class $p_k$, and takes $o_q$ to
a multiple of $p_k$. Thus $o_q$ of a linear disk bundle
with $p_k=0$ must be zero, which contradicts $o_q\neq 0$.

Proposition~\ref{prop: finite cover}
proves the claim about the finitely-sheeted covers of $\overbar{\bm{K}}$
because finitely-sheeted covers are injective on rational cohomology.
\end{proof}

\section{Classifying spaces of codimension $2$ thickenings}
\label{sec: classifying spaces of codimension 2 thickenings}

A classifying space $BRN_2$ for codimension $2$ thickenings was introduced 
in~\cite{CS76} by considering them up to concordance.
Concordant thickenings are homeomorphic rel $M$, and the set of concordance classes 
of codimension $2$ thickenings 
is bijective to the set of homotopy classes of maps
from $M$ into a classifying space $BRN_2$, see~\cite{CS76} and 
also~\cite{Mc-cone}.

A {\em concordance of oriented thickenings\,} is defined
in~\cite[p.173]{CS76}, where it is shown that 
the set of concordance classes of oriented codimension $2$ thickenings 
is bijective to the set of homotopy classes of maps
from $M$ into a classifying space $BSRN_2$, which was extensively discussed 
in~\cite{CS76, CapSha-symp}.
The space $BSRN_2$ is a path-connected CW complex~\cite[Theorem 1.4]{CS76}.

To get smoothable codimension $2$ thickenings with prescribed Euler and 
Pontryagin classes consider the diagram
\begin{equation*}
\label{diagram: BSRN_2}
\xymatrix{
BSPL & BSO\times BSPL\ar[l]^-{\oplus}\ar[l] &  
\s(\ast)\times BSO \ar[l]
\\
BSRN_2\ar@<.2pc>[r]^-{(e,\eta)}\ar[u]^-{\s} & 
BSO_2\times G/PL\ar[u]_{\iota=\s\times\pi}
\ar@<.2pc>[l]^-{s}
& \ast\times G/O\ar[u]\ar[l]
}
\end{equation*} 

Here $\ast$ is a basepoint in $BSO_2$, $\oplus$ is the Whitney sum,  
the map $\iota$ is the product of the 
stabilization $\s\co BSO_2\to BSO$ and the standard fibration
$\pi\co G/PL\to BSPL$ with fiber $SG$.
The rightmost upward arrow is the standard fibration 
with fiber $SG$. The rightmost top arrow is the product of the basepoint inclusion 
with the forget-$SO$-structure map.
The rightmost bottom arrow is the product of the basepoint inclusion
with the standard fibration $G/O\to G/PL$ which is a rational homotopy
equivalence (whose fiber $PL/O$ is $6$-connected and $\pi_i(PL/O)$ is
a finite group of h-cobordism classes of oriented homotopy $i$-spheres).
The map $e$ corresponds to
a boundary-preserving homology equivalence that sends
a thickening to a \mbox{$2$-disk} bundle with the same Euler class, 
and $\eta$ corresponds to its normal invariant.
 
The rightmost square commutes by construction. 
Corollary 4.9 on~\cite[p.208]{CS76} says that $(e,\eta)$
has a section $s$ that extends the map $BSO_2\times\ast\to BSRN_2$ 
that considers a $2$-disk bundle as a thickening. 
Proposition 1.8 on~\cite[p.183]{CS76} gives 
commutativity of the leftmost square, i.e., $\s=\oplus\circ\iota\circ (e,\eta)$,
which also implies $\s\circ s=\oplus\circ\iota\circ (e,\eta)\circ s=\oplus\circ\iota$.

Every pair of continuous maps $f\co M\to BSO_2$ and $g\co M\to G/PL$ defines a
thickening $W$ classified by $s\circ (f\times g)\co M\to BSRN_2$ whose stabilization  
\begin{equation}
\label{form: class map}
(\s\circ f)\oplus(\pi\circ g)\co M\to BSPL
\end{equation} 
lifts to $BSO$ if $g$ lifts to $G/O$.
Therefore, if $g$ lifts to $G/O$ and $M$ is smoothable, then $W$ is smoothable.
Now we are ready to prove the following.

\begin{thm} 
\label{thm: realization codim 2}
Given a closed smooth manifold $M$ and a sequence $(\b_i)_{\Z_{\ge 0}}$ 
with $\b_0\in H^2(M)$ and $\b_i\in H^{4i}(M)$ for $i\in\Z_{>0}$  
there is a positive integer $n$ and 
a smoothable oriented codimension $2$ thickening $W$ over $M$ with 
$e(W)=\b_0$ and $p_i(W)=n\b_i$.
\end{thm}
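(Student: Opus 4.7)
The plan is to mimic the proof of Theorem~\ref{thm: smoothable thick of codim >2}, using $BSRN_2$ in place of $BS\widetilde{PL}_q$ and $BSO_2 \times G/O$ (classifying smoothable oriented codimension $2$ thickenings via the section $s$) in place of $BSG_q \times BSO$. I intend to apply Theorem~\ref{thm: prescribe LR} with $X = M$, taking $\alpha_1 = (e,\eta)\co BSRN_2 \to BSO_2 \times G/PL$ (a homology equivalence by~\cite{CS76}, hence a rational homotopy equivalence between simply connected spaces), $\alpha_2 = \mathrm{id}_{BSO_2} \times (G/O \to G/PL)\co BSO_2 \times G/O \to BSO_2 \times G/PL$ (a rational homotopy equivalence since $PL/O$ has finite homotopy groups), $\beta\co BSO_2 \times G/PL \to \bm{K}_2 := K(\Z,2) \times \prod_{i>0} K(\Z, 4i)$ given by $(e, (p_i \circ \pi)_i)$ (a rational homotopy equivalence, using $BSO_2 = K(\Z,2)$ and that $\pi\co G/PL \to BSPL$ is a rational equivalence), and $f\co M \to \bm{K}_2$ representing the sequence $(\beta_0,\beta_1,\beta_2,\ldots)$. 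This should produce $n \in \Z_{>0}$ together with lifts $f_1\co M \to BSRN_2$ and $f_2\co M \to BSO_2 \times G/O$ with $\alpha_1 \circ f_1 \simeq \alpha_2 \circ f_2$.

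Let $W$ be the codimension $2$ thickening classified by $[f_1]$. Commutativity of the displayed diagram shows that the stable tangent classifying map of $W$ agrees with $\oplus \circ (\sigma \times \pi) \circ \alpha_2 \circ f_2\co M \to BSPL$, which factors through $BSO$ via the $G/O$ component of $f_2$; hence $W$ is smoothable. The Pontryagin class components of $\beta \circ \alpha_1 \circ f_1$ are scaled by $n$, yielding $p_i(W) = n\beta_i$ as required.

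The chief technical subtlety — which I expect to be the main obstacle — is arranging the application of Theorem~\ref{thm: prescribe LR} so that the Euler class $e(W)$ comes out to $\beta_0$ \emph{exactly}, rather than to $n\beta_0$. This should be possible because the factor $BSO_2 = K(\Z,2)$ is already an Eilenberg--MacLane space: the map $\beta$ is the identity on that factor, and no rational denominators arise on the $H^2$ component. A refined application of Theorem~\ref{thm: prescribe LR} that treats the Eilenberg--MacLane factor separately from the genuinely rational factors $G/PL \simeq_\Q BSPL \simeq_\Q \prod K(\Q, 4i)$ should fix the Euler class at $\beta_0$ while confining the scaling by $n$ to the Pontryagin components, which is precisely what distinguishes the codimension $2$ case from the higher codimension situation of Theorem~\ref{thm: smoothable thick of codim >2}.
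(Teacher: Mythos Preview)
Your approach overcomplicates matters and rests on claims not established in the paper. You assert that $(e,\eta)\co BSRN_2\to BSO_2\times G/PL$ is a rational homotopy equivalence of simply-connected spaces, but the paper only records that $BSRN_2$ is path-connected and that $(e,\eta)$ admits a section $s$; neither the simple connectivity of $BSRN_2$ nor the rational equivalence of $(e,\eta)$ is verified there, so invoking Theorem~\ref{thm: prescribe LR} with $\a_1=(e,\eta)$ is a genuine gap as written.

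More to the point, the lift $f_1\co M\to BSRN_2$ is superfluous: the section $s$ already manufactures a thickening from any map $M\to BSO_2\times G/PL$, so one only needs to build that map. The paper does precisely this, treating the two factors separately from the start. Since $BSO_2\simeq K(\Z,2)$, any $\b_0\in H^2(M)$ is exactly the Euler class of some $f\co M\to BSO_2$, with no scaling. Independently, Theorem~\ref{thm: prescribe LR} is applied to the single rational equivalence $G/O\to BSO\to\prod_{i>0} K(\Z,4i)$ to produce $g\co M\to G/O$ with $p_i=n\b_i$. The thickening is then classified by $s\circ(f\times g)$, and smoothability follows because the stabilization $\s\circ s=\oplus\circ\iota$ lifts to $BSO$ whenever $g$ factors through $G/O$, which it does by construction. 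Your proposed ``refined application that treats the Eilenberg--MacLane factor separately'' is essentially this argument---but once the factors are decoupled, $BSRN_2$ and $(e,\eta)$ drop out entirely, no two-sided lifting is required, and the Euler-class scaling problem never arises.
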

\begin{proof}
The standard fibration $G/O\to BSO$ with fiber $SG$
is a rational homotopy equivalence. Integral Pontryagin classes 
define a rational homotopy equivalence $BSO\to K$ where $K$ is the product
of Eilenberg-MacLane space in positive degrees divisible by $4$.
Realize the sequence $(\b_i)$ as a map $M\to K$, and 
apply Theorem~\ref{thm: prescribe LR} to the composite $G/O\to BSO\to K$, 
which gives a lift $g\co M\to G/O$ whose composite with $G/O\to BSO$ has
Pontryagin classes $p_i=n\b_i$. Any element in $H^2(M)$ is an Euler class
of a vector bundle $f\co M\to BSO_2$. Applying $s$ as in the above diagram
gives a smoothable thickening
with desired Euler and Pontryagin classes.
\end{proof}

\begin{proof}[Proof of Theorem~\ref{thm: non-smooth} for $q=2$ and $l\ge 4$]
Let $L$ be any smooth closed oriented \mbox{$l$-manifold} such that $H^{4}(L)$ 
contains an infinite order element. Use Theorem~\ref{thm: realization codim 2}
to find a codimension $2$ smoothable thickening with $e=0\neq p_1$.
If $\overbar{\bm K}$ deformation retracts to a smooth
$l$-dimensional submanifold, its normal bundle also has
$e=0\neq p_1$, which contradicts $e^2=p_1$. Proposition~\ref{prop: finite cover}
proves the claim about the finitely-sheeted covers of $\overbar{\bm{K}}$
because finitely-sheeted covers are injective on rational cohomology.
\end{proof}

\begin{proof}[Proof of Theorem~\ref{thm: non-smooth} for $q=2$ and 
$l\in\{2,3\}$]
Recall that $\pi_2(G/PL)\cong\pi_2(G/O)\cong\Z_2$. The map 
$(\eta, e)\co BSRN_2\to BSO_2\times G/PL$ has a cross-section,
so there is a codimension $2$ oriented thickening $R$ 
of $S^2$ with non-trivial normal invariant and Euler class $e$.
The composite $S^2\to BSRN_2\to G/PL$ is surjective on $\pi_2$, 
and hence on $H_2$ by the Hurewicz theorem.

If $m=2$, let $L=S^2$ and $K$ be the double of $R$ along the boundary.

If $m=3$, let $L=S^2\times S^1$ and $K$ be the double of $R\times S^1$ 
along the boundary. 

Note that $R\times S^1$ is the regular neighborhood of $L$ that
is the pullback of $R$ via the coordinate projection 
$S^2\times S^1\to S^2$, which is surjective on $H_2$.

Apply Ontaneda's Riemannian hyperbolization to smooth triangulation of $K$ 
for which $L$ is a subcomplex. Lift the inclusion $\bm L\to \bm K$ to 
a covering of $\bm K$ corresponding to $\pi_1(\bm L)$.
Since $\pi_1(\bm L)$ is quasiconvex, the covering space 
is the interior of the compact manifold $W$ with boundary that is 
the $\e$-neighborhood of the convex core, and we still denote by $\bm L$ its lift in $W$.

The regular neighborhood of $\bm L$ in $W$ is equivalent to
the regular neighborhood of $\bm L$ in $\bm K$, which by
Theorem~\ref{thm: pullback} is the pullback via $h$
of the regular neighborhood of $L$ in $K$. 
Since $h$ is surjective on $H_2$, so is the composite
\[
\bm L\to L\to S^2\to BSRN_2\to G/PL,
\]
where $L\to S^2$ is the coordinate projection if $m=3$.
Therefore, the regular neighborhood of $\bm L$ in $W$ has a 
nontrivial normal invariant.
Similarly, if $m=2$, then 
the normal invariant of $\bm L\times S^1$ in $W\times S^1$
is nontrivial (again because the coordinate projection 
$\bm L\times S^1\to \bm L$ is $H_2$-surjective).

Set $(W^\prime, L^\prime)=(W,\bm L)$ if $m=3$ and 
$(W^\prime, L^\prime)=(W\times S^1,\bm L\times S^1)$ if $m=2$.
Thus $W^\prime$ is a compact $5$-manifold
that deformation retracts onto a {\scshape pl} embedded 
closed $3$-manifold $L^\prime$
whose regular neighborhood has nontrivial normal invariant.

Arguing by contradiction suppose that $W^\prime$ deformation retracts to a 
smooth submanifold $M^\prime$. The restriction to $L^\prime$
of the deformation retraction $W^\prime\to M^\prime$ is homotopic to a diffeomorphism 
$g\co L^\prime\to M^\prime$ (which uses the geometrization if $m=3$). 
Hence the normal invariant of $g^{-1}$ is trivial. 

As we explain in Appendix~\ref{app: poincare emb},
the pullback via $g^{-1}$
of the Poincar{\'e} embedding given by the inclusion 
$L^\prime\subset W^\prime$
is isomorphic to the Poincar{\'e} embedding of the inclusion 
$M^\prime\subset W^\prime$,
By~\cite[Theorem 6.2]{CS76} the Poincar{\'e} embedding for  
$M^\prime\subset W^\prime$ can be realized by a 
locally flat embedding if and only if the normal invariants
of $g^{-1}$ equals the normal invariant of
the Poincar{\'e} embedding $L^\prime\subset W^\prime$, 
which is not the case. 
\end{proof} 

\begin{proof}[Proof of Theorem~\ref{thm: non-smoothable spine}]
Let $L$ be any closed connected {\scshape pl} $l$-manifold $L$ with some non-integer Pontryagin number. For any hyperbolized simplex the strict hyperbolization $\bm{L}$ has the same property.
Any homotopy equivalence from $\bm L$ to a closed manifold
preserves Pontryagin classes, see Remark~\ref{rmk: borel},
and hence Pontryagin numbers, which are integers for 
smooth manifolds. Thus $\bm L$ is not homotopy 
equivalent to a smooth manifold.

Fix a {\scshape pl} embedding of $L$ into a high-dimensional sphere $K$. 
Let $(\bm{K}, \bm{L})$ be the result of applying 
Ontaneda's strict hyperbolization to a {\scshape pl} 
triangulation of the pair 
$(K, L)$. 

Closed {\scshape pl} manifolds with non-integer Pontryagin number exist in all 
dimensions $l\ge 8$ divisible by $4$, see~\cite[section 5]{BLW}. There are
such simply-connected manifolds, see~\cite[V.2.9]{Bro-book}, and any closed 
simply-connected {\scshape pl} $l$-manifold with $l\ge 4$
admits a {\scshape pl} embedding into the $(2l-1)$-sphere~\cite{Irw}.
\end{proof}

\section{Hyperbolization pulls back regular neighborhood}
\label{sec: hyperb pulls back} 

Recall that unless stated otherwise 
all maps, manifolds, embeddings, isotopies, bundles, submanifolds are {\scshape pl}. 

To simplify notations for a map $f\co X\to\, \triangle$
and $J\subseteq \triangle$ we set $X_J:=f^{-1}(J)$. 

An $n$-dimensional {\em hyperbolized simplex\,}
is a map $f\co (X, \d X)\to\, (\triangle, \d\triangle )$ such that
$\triangle$ is an \mbox{$n$-simplex}, 
$X$ is a compact connected $n$-manifold with boundary,  
$f$ has degree one mod $2$, and 
for every $k$-dimensional face $\a$ of $\triangle$ the set
$X_\a$ is a \mbox{$k$-dimensional} submanifold of $X$
whose boundary equals $X_{\partial\a}$.
This definition corresponds to conditions C0, C1, C2 
in~\cite[Section 1c]{DavJan}. 
We always assume that the hyperbolized simplex is {\em proper\,} 
as defined in Appendix~\ref{app: transversality}.
Then the restriction $f$ to a component of $X_\a$ is a proper
hyperbolized simplex.

\begin{rmk} In our applications $X$ is oriented, 
the map $f\co (X, \partial X )\to (\triangle, \partial\triangle)$ has
degree one, and the 
tangent bundle to $X$ has zero rational Pontryagin classes.
\end{rmk}

For $0\le l\le k$ let $\vartriangle^l$ denote 
the convex combination of $e_1, \dots, e_{l+1}$ in $\mathbb R^{k+1}$;
thus $\vartriangle^l$  is an $l$-dimensional face of the standard $k$-simplex
$\vartriangle^k$. 

Let $K$ be a $k$-dimensional simplicial complex. Denote its  
first barycentric subdivision by $K^\prime$, and
let $p_{\!_K}\co K^\prime\to\vartriangle^k$ be the simplicial map 
defined on vertices by sending the barycenter of every $i$-dimensional simplex of $K$ 
to $e_{i+1}$. The map $p_{\!_K}$ is injective on every simplex, and thus
it ``folds'' $K^\prime$ onto $\vartriangle^k$.
Then
\[
\bm{K}_f:=\{(s,x)\in K\times X\co p_{\!_K}(s)=f(x)\}
\]
is the fiber product of $p_{\!_K}$ and $f$.
We refer to $h_{\!_K}\co\bm{K}_f\to K$ given by $h_{\!_K}(s,x)=s$ as the {\em hyperbolization map}.
If $K$ is a closed manifold, then so is $\bm{K}_f$.

Let $L$ be a full $l$-dimensional subcomplex of $K$. Then $p_{\!_L}$ is the
restriction of $p_{\!_K}$ to $L$. Let $f_l$ be the restriction of $f$ to a component 
$X^l$ of $f^{-1}(\vartriangle^l)$; recall that $f_l$ is a hyperbolized $l$-simplex.
The fiber product $\bm{L}_{f_l}$ of $p_{\!_L}$ and 
$f_l$ can be also described as  
\[
\bm{L}_{f_l}= 
\{(s,x)\in L\times X\co p_{\!_L}(s)=f(x)\}
\]
because the equation $p_{\!_L}(s)=f(x)$ has no solutions outside $L\times X^l$.

\begin{proof}[Proof of Theorem~\ref{thm: pullback}]
We are going to quote results that require 
$\bm{K}$ to stay away from the boundary of the ambient manifold.
To this end we attach a collar to $K\times X$. Let $\bar{X}$ be the result
of attaching $\d X\times [0,1]$ along the boundary of $X$.
Let $b$ be the barycenter of $\vartriangle^k$, and
let $\bar\vartriangle^k$ be the image of $\vartriangle^k$ under the self-map
of $\R^{k+1}$ given by $x\to b+2(x-b)$; thus $\bar\vartriangle^k\supset\vartriangle^k$ 
are concentric simplices in the affine $k$-plane spanned by $\vartriangle^k$.
We extend $f$ to the map $\bar f\co\bar{X}\to \bar\vartriangle^k$
as follows: pass to subdivisions in which $f$ is simplicial, take an arbitrary
simplex $\s$ in the subdivision of $\d X$, and send the prism $\s\times [0,1]$
linearly to the portion of the cone over $f(\s)$ with tip $b$
that lies in $\bar\vartriangle^k\setminus\Int(\vartriangle^k)$.

Thus $\bm{K}$ is a locally flat submanifold of $\mathcal K$
with trivial normal bundle $\nu_{\bm K}^{\,\mathcal K}$,
cf. Appendix~\ref{app: transversality}. 
Note that $\bm{K}$ lies in $K\times X$
and intersects its boundary, and we attached collars precisely
to keep $\bm K$ away from the boundary.
Recall that $\bm{L}$ 
is the intersection of $\bm{K}$ and $\mathcal L:=L\times \bar X$.
By Appendix~\ref{app: transversality}
we can (and will) choose $\nu_{\bm K}^{\,\mathcal K}$ 
so that its fibers over $\bm L$ contains the fibers of 
the normal bundle $\nu_{\bm L}^{\,\mathcal L}$ of 
$\bm{L}$ in $\mathcal L$.

Suppose $k-l>2$. 
The normal block bundle $\nu_{\mathcal L}^{\,\mathcal K}$ of $\mathcal L$
in $\mathcal K$ is the pullback of the normal block bundle $\nu_{L}^{K}$ 
of $L$ in $K$ via the coordinate projection $\pi_L\co\mathcal L\to L$.
The hyperbolization map $h_L$ factors as the inclusion $\bm{L}\to\mathcal L$
followed by $\pi_L$, and therefore, $h_L$ pulls  
$\nu_{L}^{K}$ back to $\nu_{\mathcal L}^{\,\mathcal K}\vert_{\bm{L}}$.
By~\cite[Theorem 1.2]{RS-II} there is a small isotopy of $\mathcal K$
that takes $\bm{K}$ to a submanifold $\bm{K}^t$ that 
is block transverse to $\mathcal L$, 
i.e., $\bm{L}^t:=\bm{K}^t\cap \mathcal L$ is a submanifold, and
the intersection of $\bm{K}^t$ and $\nu_{\mathcal L}^{\,\mathcal K}$ 
equals $\nu_{\mathcal L}^{\,\mathcal K}\vert_{\bm{L}^t}$, 
the normal block bundle of $\bm{L}^t$ in $\bm{K}^t$.
(Here we do not distinguish between a block bundle and its total space).
Since the above isotopy is small,
the projection of $\nu_{\bm K}^{\,\mathcal K}$ restricts to a homeomorphism  
$(\bm{K}^t, \bm{L}^t)\to (\bm{K}, \bm{L})$ identifying
the normal block bundle of $\bm{L}$ in $\bm{K}$ with 
$\nu_{\mathcal L}^{\,\mathcal K}\vert_{\bm{L}^t}$
which is isomorphic to $\nu_{\mathcal L}^{\,\mathcal K}\vert_{\bm{L}}$, as claimed.

If $k-l\le 2$, then the same argument works except that one has to replace block bundles structures
on $\nu_{\mathcal L}^{\,\mathcal K}$, $\nu_{L}^{K}$ by stratifications in the sense of Stone~\cite{Sto72}
of the regular neighborhoods of $\mathcal L$ in $\mathcal K$, and $L$ in $K$, respectively, and appeal to a more general
transversality theorem of Stone, see~\cite[Corollary on p.97]{Sto72}; 
also cf.~\cite[pp.173--175]{CS76},~\cite[p.285]{Mc75} and~\cite[p.162]{Mc-cone}.
\end{proof}

\begin{proof}[Proof of Corollary~\ref{cor: Ontaneda smooth}]
By~\cite{Ont-ihes} the manifold $\bm K$ has a smooth structure 
that admits a Riemannian metric $g_{\e, K}$ as in (b), and induces the given 
{\scshape pl} structure.
By Theorem~\ref{thm: pullback} the {\scshape pl} submanifold $\bm L$ has a 
linear disk bundle neighborhood in $\bm K$.
Therefore, by~\cite[Theorem 7.3]{LR65} there is a 
piecewise-differentiable homeomorphism $d$ of $M$ that moves $\bm L$
to a smooth submanifold $h(\bm L)$ of $\bm K$. 
Hence $\bm L$ is a smooth submanifold in the pullback, via $d$, of the 
given smooth structure on $M$. Then $d$ is a diffeomorphism of these
smooth structures, and the pullback metric $d^*g_{\e, K}$ satisfies (b).
\end{proof}

\begin{rmk}
\label{rmk: convex-cocompact}
Let us justify the claim made in the introduction that the cover of 
the Riemannian hyperbolization $\bm K$ corresponding to $\pi_1(\bm L)$
is convex-cocompact.
Since $\bm L$ is locally convex in the CAT$(-1)$ metric on $\bm K$, 
the subgroup $\pi_1(\bm L)$ of $\pi_1(\bm K)$ is quasiconvex.
Hence $\pi_1(\bm L)$ is a hyperbolic group with ideal boundary 
equivariantly homeomorphic to its limit set in the ideal boundary
of $\pi_1(\bm K)$. Every hyperbolic group acts cocompactly on the 
triple space of its boundary.
A group of isometries of a negatively  pinched Hadamard manifold
is convex-cocompact if and only if 
the group acts cocompactly on the 
space of triples of the limit set~\cite{Bow95, Bow99}.
\end{rmk}

\appendix

\section{Transversality and hyperbolization}
\label{app: transversality}

The purpose of this appendix is
to prove that {\em $\bm K$, $\bm L$ are submanifolds 
of $\mathcal K$, $\mathcal L$ that have
trivial normal bundles $\nu_{\bm K}^{\mathcal K}$, 
$\nu_{\bm L}^{\mathcal L}$ such that each fiber
of $\nu_{\bm L}^{\mathcal L}$ lies in a fiber of $\nu_{\bm K}^{\mathcal K}$.}

We follow notations and conventions of Section~\ref{sec: hyperb pulls back}, and
in particular, work in the {\scshape pl} category. 
We also assume that $K$, $L$ are closed manifolds, and
identify ${\Delta}^k$ with a \mbox{$k$-simplex} $\triangle$ in $\R^k$.
Then $p_{\!_K}-f$ becomes a map from $\mathcal K$
to $\R^k$ whose zero set is $\bm K$.
It is stated in~\cite[1f.3]{DavJan} that the map is ``transverse to $0$'', 
and hence ``$\bm K$ is a submanifold of $\mathcal K$ with trivial normal bundle''. 
No justification is given, and in fact, as the author
learned from Pedro Ontaneda, the claim is not true without
the extra assumption that $f$ is ``transverse to every face of $\triangle^k$'', 
which probably is implicit in~\cite{DavJan, ChaDav}.

Two submanifolds are {\em transverse\,} if in a local chart they become 
hyperplanes in general position; the same definition works
at a boundary point~\cite[p.436]{AZ67}. Clearly,
if $M$, $N$ are transverse, then $M\cap N$ is 
a locally flat submanifold of $M$.

Let  $\G_{\!_K}\subset\mathcal K\times\R^{2k}$ be the graph of $p_{\!_K}\times f$,
let $P_k$ be the diagonal in $\R^{k}\times\R^k=\R^{2k}$, and
let $\mathcal P_{\!_K}=\mathcal K\times P_k$.
The coordinate projection $\G_{\!_K}\to\mathcal K$ is a homeomorphism that
takes $\G_{\!_K}\cap \mathcal P$ to $\bm K$.

By Lemma~\ref{lem: loc flat} below, 
$\G_{\!_K}$ and $\mathcal P_k$ are transverse, and in particular, 
$\G_{\!_K}\cap\mathcal P_k$ is
a locally flat submanifold of $\G_{\!_K}$ of codimension $k$.
The normal bundle of $\G_{\!_K}\cap\mathcal P_k$ in $\G_{\!_K}$ 
is trivialized by the
composite of the coordinate projection $\mathcal K\times\R^{2k}\to \R^{2k}$ 
followed by the orthogonal projection  $\R^{2k}\to P_k^\bot$
onto the orthogonal complement of $P_k$ in $\R^{2k}$.

The restriction of the
orthogonal projection  $\R^{2k}\to P_k^\bot$ to $\R^{2l}$
is the orthogonal projection  $\R^{2l}\to P_l^\bot$.
Thus one can choose the trivialization of
the normal bundle of $\G_{\!_L}\cap\mathcal P_l$ 
in $\G_{\!_L}$ so that each of its fibers lies in the fiber of
the normal bundle of
$\G_{\!_K}\cap\mathcal P_k$ in $\G_{\!_K}$.

To prove Lemma~\ref{lem: loc flat} we need some terminology.
If $z$ is a point in the subspace $S$ of $Z$, and $U$ is a neighborhood of $z$ in $Z$
such that there is a map $U\to U\cap S$ that is a disk bundle over a disk
then $U$ is a {\em square at $z\in S\subset Z$}. A map of squares is 
a {\em product\,} if after composing with trivializations
of the disk bundles the map becomes the product of two maps of disks
that correspond under the trivialization to bases and fibers.

\begin{ex} 
\label{ex: square}
Let $Z$ be a triangulated compact manifold (possibly with boundary).
Give $Z$ the path-metric $d$ induced by making
every simplex isometric to the standard simplex.
Given $z\in Z$ let $\s$ be the simplex of $Z$
whose relative interior $\mathring{\s}$ contains $z$.
Let $B$ be a metric ball about $z$ in $\mathring{\s}$. 
If $\de>0$ is sufficiently small, then
the nearest point projection $\pi$ onto $\s$ 
is defined on the $\de$-neighborhood of $B$, and 
$U=\{z\in Z\,|\, d(z,\s)\le\de,\ \pi(z)\in B\}$ 
is a square lying in the star of $\s$. Moreover, 
$\pi\co U\to B$ is a disk bundle 
whose fiber is the cone on the link of $\s$ in $Z$.
Each fiber (or, rather, its intersection with any simplex
that meets $\mathring\s$) is orthogonal to $\s$.
Thus the base and the fibers are orthogonal.
\end{ex}

\begin{ex}
The folding map $p_{\!_K}$ takes every simplex $\s$ of $K^\prime$
homeomorphically onto $p_{\!_K}(\s)$. By Example~\ref{ex: square}
at every point $z\in\mathring{\s}$ there is a square sent by
$p_{\!_K}$ to a square at $p_{\!_K}(z)\in p_{\!_K}(\mathring{\s})$, 
and moreover, $p_{\!_K}$ is a product. The factor
corresponding to the map of bases is a bijection, being
the restriction of $p_{\!_K}\vert_\s$.
Since $K$ is a boundaryless manifold, 
the factor corresponding to the map of fibers is bijective only
if $z\in \mathring\triangle$, in which case the fiber is a point.
\end{ex}

\begin{ex}
Let $f\co X\to\triangle$ be a hyperbolized simplex. 
Denote the relative interior of a face $\a$ of 
$\triangle$ by $\mathring\a$. We say that $f$ is {\em proper\,} 
if for every face $\a$ of $\triangle$ with $\a\neq\triangle$ and every 
$z\in X_{\mathring\a}$ there is a square at $z\in X_{\mathring\a}$ in 
$X_{\mathring\a\,\cup\mathring\triangle}$ whose disk bundle
is the pullback via $f$ of the disk bundle of
a square at $f(z)\in \mathring\a$ in $\mathring\a\cup\mathring\triangle$
as in Example~\ref{ex: square}.
Thus $f$ is a product such that the factor corresponding to the map
of the fibers is a bijection. The factor corresponding to
the bases need not be a bijection because $f\co X_{\mathring\a}\to\mathring\a$
is not locally bijective if $\dim(X)>1$.
\end{ex} 

\begin{lem}
\label{lem: loc flat}
If $f$ is proper, then $\G$ and $\mathcal P$ are transverse. 
\end{lem}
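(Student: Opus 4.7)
The plan is to exhibit, at any point $z_0=(s_0,x_0)\in\bm K$ regarded as a point of $\G\cap\mathcal P\subset\mathcal K\times\R^{2k}$, a PL chart in which $\G$ and $\mathcal P$ become affine linear subspaces whose codimensions sum to the codimension of their intersection; this is precisely the transversality condition from~\cite{AZ67}. The codimensions will work out to $2k$ for $\G$ and $k$ for $\mathcal P$, intersecting in a linear $k$-plane that projects homeomorphically onto $\bm K$.

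I would first set up adapted local coordinates. Let $\alpha\subset\triangle$ be the face whose relative interior contains the common value $p_{\!_K}(s_0)=f(x_0)$, set $n=\dim\alpha$ and $m=k-n$, and let $\sigma$ be the simplex of $K^\prime$ whose interior contains $s_0$, so that $p_{\!_K}\vert_\sigma$ is a PL bijection onto $\alpha$. The square of Example~\ref{ex: square} provides a product neighborhood $B_K\times F_K$ of $s_0$, with $B_K\subset\mathring\sigma$ an $n$-disk and $F_K$ an $m$-disk, in which by Example 2 the folding becomes $(b,\phi)\mapsto(p_{\!_K}(b),q_K(\phi))$. Properness of $f$ yields analogous coordinates $B_X\times F_X$ around $x_0$ with $f(b^\prime,\phi^\prime)=(f(b^\prime),q_X(\phi^\prime))$, where crucially $q_X$ is a PL bijection onto the fiber disk $F_\triangle$ of the square at $p_{\!_K}(s_0)$. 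Splitting $\triangle$ at $p_{\!_K}(s_0)$ into base and fiber coordinates, I write $\R^{2k}=(\R^n\oplus\R^m)\oplus(\R^n\oplus\R^m)$ with coordinates $(a_1,\b_1,a_2,\b_2)$.

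The main step is a sequence of shear-type PL coordinate changes. First I translate the $\R^{2k}$-factor by the graph values, setting $\tilde a_1=a_1-p_{\!_K}(b)$, $\tilde\b_1=\b_1-q_K(\phi)$, $\tilde a_2=a_2-f(b^\prime)$, $\tilde\b_2=\b_2-q_X(\phi^\prime)$; this brings $\G$ to the linear subspace $\{\tilde a_1=\tilde\b_1=\tilde a_2=\tilde\b_2=0\}$ but leaves $\mathcal P$ nonlinear. Then I replace $b$ by $\tilde b:=p_{\!_K}(b)-f(b^\prime)$, which is a PL change on $B_K\times B_X$ because $p_{\!_K}\vert_{B_K}$ is a PL bijection, and I replace $\phi^\prime$ by $\tilde\phi^\prime:=q_X(\phi^\prime)-q_K(\phi)$, which is a PL change on $F_K\times F_X$ because $q_X$ is a PL bijection. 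A direct substitution then shows that $\mathcal P$ is cut out by the two linear equations $\tilde b=\tilde a_2-\tilde a_1$ and $\tilde\phi^\prime=\tilde\b_1-\tilde\b_2$, so $\mathcal P$ becomes an affine $3k$-plane meeting the affine $2k$-plane $\G$ exactly in the linear $k$-plane $\{\tilde a_i=\tilde\b_i=\tilde b=\tilde\phi^\prime=0\}$, giving the required general position.

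The main subtlety is that the fiber-direction change needs $q_X$ to be a PL bijection, and this is exactly where properness of $f$ enters: without it the analogous folding map $q_K$ typically fails to be a bijection (for instance when $K=S^1$ and $s_0$ is the barycenter of an edge, $q_K$ is a two-to-one fold), so no single shear could simultaneously linearize both $\G$ and $\mathcal P$. The base-direction change is unconditional because $p_{\!_K}\vert_\sigma$ is automatically a bijection from the simplicial description of the folding map.
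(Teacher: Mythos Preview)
Your argument is correct and uses the same core ingredients as the paper's proof: the local product decomposition coming from the squares, together with the two bijectivity facts (that $p_{\!_K}$ is a PL bijection on the base direction and, by properness, $f$ is a PL bijection on the fiber direction). The execution differs only in the order of linearization. The paper first rotates $\R^{2k}$ by $(z,w)\mapsto(z+w,z-w)$ to make $\mathcal P$ a coordinate plane, then groups the $\mathcal K$-coordinates as $\bm a=(v,\beta)$, $\bm b=(b,\nu)$ so that $\bm b\mapsto I(\bm b)=(p_b(b),f_n(\nu))$ is injective, observes that $\Gamma$ is thereby a graph over a subspace $Q$ complementary to $\mathcal P$, and straightens the graph by a shear fixing $\mathcal P$. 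You instead shear first by the graph values to flatten $\Gamma$, and then use the two bijections to change the $\mathcal K$-coordinates and make $\mathcal P$ affine. Your sequence of shears is a bit more explicit and avoids the intermediate graph-over-$Q$ description, but both routes exploit exactly the same injectivity and arrive at the same pair of transverse affine subspaces; neither is more general than the other.
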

\begin{proof}
Fix $(s_{\circ}, x_\circ)\in\bm K$, and use the disk bundle structures
of the squares at $s_\circ\in K$, $x_\circ\in X$ to 
represent points $s=(b, v)$, $x=(\b, \nu)$ of $\mathcal K$ 
near $s_\circ$, $x_\circ$, respectively. Here $b, \b$ are in the bases, while
$v, \nu$ are in the fibers of the disk bundles.

By working in a chart we will assume that both squares are in $\R^k$
with fibers and the bases parallel to coordinate planes, and moreover, 
$s_\circ$, $x_\circ$ become the origin. Denote the product of the
squares in $\R^k\times\R^k$ by $\Pi$.

Also we rotate and translate 
$\triangle$ in $\R^k$
so that $p_{\!_K}(s_{\circ})=f(x_{\circ})$ is the origin, and
the simplex $\tau$ of $\triangle$ whose relative interior contains the origin
is a coordinate plane whose dimension equals $\dim(\tau)$. 
Then write $p_{\!_K}(b, v)=(p_b(b), p_n(v))$ and $f(\b, \nu)=(f_b(\b), f_n(\nu))$
where $p_b$, $f_b$ are the base-coordinates, 
$p_n$, $f_n$ are the fiber-coordinates, and by construction
the bases and the fibers lie in coordinate planes. 
Recall that $p_b$ and $f_n$ are injective.

Group the coordinates as follows
\[\bm a=(v, \b),\ \
\bm b=(b, \nu),\ \ 
I(\bm b)=(p_{b}(b), f_{n}(\nu)), \ \ 
J(\bm a)=(f_b(\b), p_n(v))
\]  
so that $\G$ and $\mathcal P$ look in these coordinates in 
$\R^{k}\times\R^k\times\R^k\times\R^k$ as
\[
\G^\prime=\{(\bm a, \bm b, I(\bm b), J(\bm a))\,|\, (\bm a, \bm b)\in \Pi\}
\qquad
\mathcal P^\prime=\{(\bm a, \bm b, \bm u, \bm u)\,:\, 
\bm a, \bm b, \bm u\in \R^k\}
\]
The map $(x, y, z, w)\to (x, y, z+w, z-w)$, where $x, y, z, w\in\R^{k}$,
takes $\G^\prime$, $\mathcal P^\prime$ to
\[
\overline\G=\{(\bm a, \bm b, I(\bm b)+J(\bm a), I(\bm b)-J(\bm a))
\,|\, (\bm a, \bm b)\in \Pi\}
\]
\[
\overline{\mathcal P}=\{(\bm a, \bm b, \bm u, 0)\,|\, \bm a, \bm b, \bm u\in\R^k\}.
\]
The subspace
$Q=\{(\bm a, 0, 0, \bm v)\,|\, \bm a, 0, \bm v\in\R^k\}$ is orthogonal 
to $\overline{\mathcal P}$, and
$Q\cap \overline{\mathcal P}=\{(\bm a, 0, 0, 0)\,|\, \bm a\in\R^k\}$.
The injectivity of the map $\bm b\to I(\bm b)$ implies
that the orthogonal 
projection of $\overline{\G}$ onto $Q$ is one-to-one because 
from $\bm a$ and $I(\bm b)-J(\bm a)$ one recovers $I(\bm b)$, 
and hence $\bm b$. Thus $\overline{\G}$ is a graph of a map 
$h$ from a neighborhood $D$ of the origin in $Q$ to $\R^{2k}$,
and hence $\overline{\G}\cap\overline{\mathcal P}$ 
is a graph of $h$ over $D\cap \overline{\mathcal P}$.
Change the coordinates to $q=(\bm a, 0, 0, \bm v)$ and 
$p=(0, \bm b, \bm u, 0)$. 
The homeomorphism $(q, p)\to (q, q+h(p))$,  $q\in D$, $p\in \R^{2k}$
preserves $\overline{\mathcal P}$ and takes $D\times\{0\}$ to 
the graph of $h$. Hence the graph of $h$ 
is transverse to $\overline{\mathcal P}$.
Thus $\G$ and $\mathcal P$ are transverse.
\end{proof}

\section{Lifting and power maps}
\label{app: Lifting and power maps}  

This appendix presents an obstruction theory argument
that is used to prove the existence of smoothable thickenings with
prescribed rational characteristic classes. 
It is based on~\cite[Appendix B]{BelKap-jams} where
linear disk bundles were treated.

Let $K$ be the the weak product~\cite[p.28]{Whi78}
of the family $\{K(G_i, i)\}_{i\in I}$ 
of pointed Eilenberg-MacLane spaces such that each $G_i$ is
infinite cyclic. For $S\subset I$ let
\[K_S:=\{(x_i)\in K\,:\, x_i=\ast_i\ \ \text{if}\ \ i\notin S\},\]
where $\ast_i\in K(G_i, i)$ is the basepoint.
For an integer $n$ let $n_i$ denote 
a cellular basepoint-preserving self-map of $K(G_i, i)$ induced by
the ``multiplication by $n$'' on $G_i$, where
we let $n_i$ be the identity map if $n=1$.
The product of the maps $n_i$ over $i\in S$ 
is denoted by $\bm n\co K_S\to K_S$ and called 
an {\em $n$-power map of $K_S$}. 
Let $\overline{\bm n}\co K\to K$ be the map
that sends the coordinate $x_i$
of $(x_i)$ to $n_i(x_i)$ if $i\in S$ and to $x_i$ if
$i\in I\setminus S$. Thus $\overline{\bm n}\vert_{K_S}=\bm n$
and we call $\overline{\bm n}$ the {\em extension\,} of an 
$n$-power map of $K_S$.

Consider the following diagram of CW complexes and cellular maps
\begin{equation*}
\xymatrix@1{& B\ar[d]|{\overset{\,}{\b}} &\\
A_1\ar[ru]^{\a_1} & K 
& A_2\ar[lu]_{\a_2} \\
& X\ar[u]|{\overset{\,}{\overline{\bm n}\,\circ f}}\ar@{.>}[lu]^{f_{1}}\ar@{.>}[ru]_{f_{2}} &}
\end{equation*}
where $X$ is a finite complex $X$, 
$f\co X\to K$ is a cellular map, 
$K$ is $2$-connected, $A_1$, $B$, $A_2$ are simply-connected, 
and $\b$, $\a_1$, $\a_2$ are fibrations which are
rational homotopy equivalences.
It follows that the fibers of $\b$, $\b\circ\a_1$, $\b\circ\a_2$ 
are simply-connected spaces with finite homotopy groups.

Since $f(X)$ is compact, the definition of a weak product
places $f(X)$ inside some $K_J$ for a finite subset $J$ of $I$.
Hence $K_J$ is the product of finitely many $K(G_i, i)$-factors
indexed by $J$. In this setting we have the following:

\begin{thm} 
\label{thm: prescribe LR} 
There is a positive integer $n$ and maps $f_1$, $f_2$ such that 
the {\em extension\,} $\overline{\bm n}\co K\to K$ of an 
$n$-power map of $K_J$ makes the diagram commute. 
\end{thm}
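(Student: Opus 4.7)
The plan is to solve the lifting problem of $\overline{\bm n}\circ f\co X\to K$ through each of the two fibrations $p_1:=\beta\circ\alpha_1$ and $p_2:=\beta\circ\alpha_2$ separately. For each $\nu\in\{1,2\}$, the fibration $p_\nu\co A_\nu\to K$ is a rational equivalence between simply-connected spaces, so its homotopy fiber $F_\nu$ is simply-connected with finite homotopy groups in each dimension. I would set up the Moore--Postnikov tower
\[
K=E_0^\nu\longleftarrow E_1^\nu\longleftarrow\cdots\longleftarrow A_\nu,
\]
in which each $E_k^\nu\to E_{k-1}^\nu$ is a principal $K(\pi_k F_\nu,k)$-fibration classified by $\theta_k^\nu\in H^{k+1}(E_{k-1}^\nu;\pi_k F_\nu)$. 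It suffices to lift up to $E_{\dim X}^\nu$, because the fiber of $A_\nu\to E_{\dim X}^\nu$ has homotopy concentrated in degrees $>\dim X$ and so any map $X\to E_{\dim X}^\nu$ lifts automatically to $A_\nu$.

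The key technical input, in the spirit of~\cite[Appendix B]{BelKap-jams}, is that for any finite abelian group $G$ and any positive-degree class $\zeta\in H^d(K_J;G)$, the map $\overline{\bm m}^*$ sends $\zeta$ to $0$ once $m$ is divisible by the exponent of $G$. This follows because $K_J$ is a finite product of $K(\mathbb{Z},i)$'s with $i\ge 3$ (by $2$-connectedness of $K$), so K{\"u}nneth together with the Cartan--Serre description makes $H^*(K_J;G)$ an algebra generated by monomials in admissible Steenrod-operation images of the integral fundamental classes $\iota_i$; additivity of those operations combined with $\overline{\bm m}^*\iota_i=m\iota_i$ forces $\overline{\bm m}^*$ to act on any monomial of weight $r\ge 1$ as multiplication by $m^r$, which vanishes modulo the exponent of $G$.

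Fix $m_0$ to be a common multiple of the exponents of $\pi_k F_\nu$ for $\nu=1,2$ and $1\le k\le\dim X$, and set $n=m_0^{\dim X}$. I then lift the power map $\overline{\bm n}\co K\to K$ itself up each Moore--Postnikov tower by telescoping factors. Inductively, having built $\widetilde g_{k-1}\co K\to E_{k-1}^\nu$ lifting $\overline{\bm{m_0^{k-1}}}\co K\to K$, I consider the composition $\widetilde g_{k-1}\circ\overline{\bm{m_0}}\co K\to E_{k-1}^\nu$, which lifts $\overline{\bm{m_0^k}}$; its obstruction to further lifting to $E_k^\nu$ is
\[
\overline{\bm{m_0}}^*\bigl(\widetilde g_{k-1}^*\theta_k^\nu\bigr)\in H^{k+1}\bigl(K;\pi_k F_\nu\bigr),
\]
which vanishes by the key lemma, producing $\widetilde g_k$. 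Composing the final lift $\widetilde g_{\dim X}\co K\to E_{\dim X}^\nu$ with $f$ yields $f_\nu\co X\to A_\nu$ lifting $\overline{\bm n}\circ f$, simultaneously for $\nu=1,2$. The main obstacle is that each intermediate obstruction $\widetilde g_{k-1}^*\theta_k^\nu$ is a new finite-order class that depends on prior lifting choices and cannot be controlled \emph{a priori}; the telescoping-multiplier trick sidesteps this precisely because one extra factor of $\overline{\bm{m_0}}^*$ always absorbs any such class by the key lemma, independently of history.
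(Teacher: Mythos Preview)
Your argument has a genuine gap: you only arrange that each $f_\nu$ lifts $\overline{\bm n}\circ f$ through the composite fibration $\beta\circ\alpha_\nu\co A_\nu\to K$, but commutativity of the diagram requires more --- the two maps $\alpha_1\circ f_1$ and $\alpha_2\circ f_2$ into $B$ must be homotopic, not merely project to the same map into $K$. Two lifts of $\overline{\bm n}\circ f$ through $\beta$ can differ by classes in $H^j(X;\pi_j(F_\beta))$, and nothing in your construction controls this. This is precisely the content that the theorem is used for in Section~\ref{sec: Smoothable thickenings}: one needs $\alpha_1\circ f_1\simeq\alpha_2\circ f_2$ so that the stabilized block bundle lifts to $BSO$. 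The paper handles this with a second round of the power-map trick: after producing sections $\sigma_1,\sigma_2$ over $K_J$, it kills the difference cochains of $\alpha_1\circ\sigma_1$ and $\alpha_2\circ\sigma_2$ (which lie in $H^j(K_J;\pi_j(F_\beta))$) by precomposing with a further power map $\bm l$, and only then sets $f_r=\sigma_r\circ\bm l\circ f$.

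Two smaller points. First, you state the key vanishing lemma for $K_J$ but then apply it to obstructions in $H^{k+1}(K;\pi_kF_\nu)$ with $K$ the full weak product; you should lift over $K_J$ (as the paper does), since $f(X)\subset K_J$ anyway. Second, your justification of the key lemma via ``admissible Steenrod-operation images of $\iota_i$'' is clean only for prime coefficients; for cyclic $\mathbb{Z}_{p^k}$ coefficients the cohomology of $K(\mathbb{Z},i)$ does not admit such a simple monomial description. The paper's route --- reduce to cyclic summands, quote the single-factor case, and induct on the number of factors via the K\"unneth sequence together with a squaring-of-power-maps diagram chase --- avoids this issue.
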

\begin{proof}
First, we show that if $A$ is a finite abelian group,
then there is a power map 
that induces the zero map on $H^n(K_J; A)$ for every $n>0$.

We can assume that $A$ cyclic because 
the factorization of $A=\bigoplus_l \Z_{n_l}$ as a sum of 
finitely many cyclic groups
gives rise via long exact coefficient sequences to 
a natural factorization $H^n(K_J;A)=\bigoplus_l H^n(K_J; \Z_{n_l})$, and
if for every $l$ there is a power map that
annihilates $H^n(K_J; \Z_{n_l})$ for all $n>0$, then
their composite sends $H^{n}(K_J;A)$ to zero for all $n>0$.

Induct by the number of $K(\Z, i)$-factors of $K_J$.
If $K_J=K(\Z, i)$ and $A=\Z_{n_l}$, then an $\bm n_l$-power map
works~\cite[Lemma 4]{HanQua}.
For the induction step write $J$ as the disjoint union of the subsets 
$S, T$ so that $K_J=K_S\times K_T$.
By Hurewicz theorem $K_S$, $K_T$ have finitely generated homology groups, and
since $A$ is cyclic we have $A\otimes A\cong A$ and $\mathrm{Tor}(A, A)=0$,
which gives the natural
K\"unneth short exact sequence~\cite[Proposition VI.12.16]{Dol-book} 
{\small\begin{equation*}
0\to\!\!\bigoplus_{\substack{i+j=n}}\!\! H^i(K_S; A)\otimes 
H^j(K_T; A)\to H^n(K_J; A)\to\!\!\!\! 
\bigoplus_{\substack{i+j=n+1}}\!\!\! \mathrm{Tor}(H^i(K_S; A), H^j(K_T; A))\to 0.
\end{equation*}}
\vspace{-10pt}

By induction there are two power maps $\bm{s}$, $\bm{t}$
that respectively annihilate $H^i(K_S; A)$, $H^j(K_T; A)$ for any $i, j>0$, and
hence $\bm{st}$ sends all these groups to zero. Consider $\bm{st}$
as a self-map of $K_J$, and note that it sends to zero the kernel
and the quotient in the above K\"unneth sequence.
A diagram chase on the three copies of the K\"unneth sequence 
stacked on top of each other implies that the power map 
$\bm{st}\circ\bm{st}$ of $K_J$ 
annihilates $H^n(K_J; A)$.

Fix $r\in\{1,2\}$.
The obstruction classes to lifting the identity map
$\iota\co K_J\to K_J$ as a section of 
$\b\circ\a_r$ over $K_J$ lie in 
the groups $H^{j+1}(K_J, \pi_j(F_{_{\!\b\circ\a_r}}))$ 
where $F_g$ denotes the homotopy fiber of $g$.
Fix a power map $\bm k$ that annihilates this cohomology group
for all $j$.
Naturality of obstruction classes 
gives a map $\s_r\co K_J\to A_r$ such that $\beta\circ\a_r\circ\s_r$ is homotopic
to $\bm k=\iota\circ \bm k$. 

The cohomology classes of the difference cochains of $\a_1\circ\s_1$, $\a_1\circ\s_2$
lie in the groups $H^{j}(K_J, \pi_j(F_\b))$. 
Let $\bm l$ be a power map that annihilates this cohomology group
for all $j$.
By naturality of the difference cochains the maps
$\a_1\circ\s_1\circ\bm{l}$, $\a_2\circ\s_2\circ\bm{l}$ 
are homotopic. 
Setting $f_r:=\s_r\circ\bm{l}\circ f$ and $n:=kl$, we conclude that 
$\b\circ\a_r\circ f_r$ is homotopic to 
$\bm{k}\circ\bm{l}\circ f=\bm{n}\circ f=\overline{\bm{n}}\circ f$. 
\end{proof}

\begin{rmk}
\label{rmk: integer multiple}
The proof of Theorem~\ref{thm: prescribe LR}
implies that $n$ can be replaced by any positive integer multiple of $n$.
\end{rmk}

\begin{rmk}
Theorems~\ref{thm: realization} and~\ref{thm: realization codim 2} use the case 
when $\a_1=\a_2$ is the identity map of $B$ and $f_1=f_2$.
The full generality of the theorem is needed in Section~\ref{sec: Smoothable thickenings}.
\end{rmk}

\begin{rmk} In Theorem~\ref{thm: prescribe LR}, if
the homotopy class $[f]$ of $f$ is thought of as 
$(\a_i)$ in $\!\underset{i\in J}{\oplus} H^i(X)$ with $\a_i\in H^i(X)$,
then $[\bm n\circ f]$ corresponds to $n\a_i$. 
\end{rmk}

\section{Codimension two Poincar{\'e} embeddings}
\label{app: poincare emb}

In this appendix we review a result in~\cite{CS76} 
about compact manifolds
with two codimension $2$ spines one of which is locally flat. 

As usual, we work in the {\scshape pl} category.
Throughout this section $M$ be a closed oriented manifold and 
$W$ is a compact oriented manifold with boundary with $\dim(W)-\dim(M)=2$. 
For a vector bundle $\xi$ over $M$
let $p_\xi\co D_\xi\to M$ and $S_\xi=\d D_\xi$ denote 
the associated disk and sphere bundles, respectively.  

An oriented {\em h-Poincar{\'e} embedding\,} of $M$ into $W$ 
consists of an oriented $2$-plane bundle $\xi$ over $M$,
a finite Poincar{\'e} pair $(E, {S_\xi}\coprod F)$, and a homotopy equivalence 
$h\co (W, \d W)\to (E\cup_{S_\xi} D_\xi, F)$ that maps the fundamental class
$[W,\d W]$ to the class that corresponds
to the fundamental class of $(D_\xi, S_\xi)$
after excising $E\setminus S_\xi$.

Any embedding of $M$ into $W$ gives rise to a 
h-Poincar{\'e} embedding as follows, see~\cite[p.210]{CS76}. 
Let $\xi$ be the $2$-plane bundle over $M$ whose 
Euler class is the normal Euler class of $M$ in $W$.
Let $R$ be a regular neighborhood of $M$ in $W$; 
thus $C:=W\setminus\mathrm{Int}(R)$ deformation retracts onto $\d R$.
By~\cite[Proposition 1.6]{CS76} there is
a homology isomorphism $\check h\co (R,\d R)\to (D_\xi, S_\xi)$
such that $p_\xi\circ \check h\vert_M$ is homotopic to the identity of $M$.
Let $E$ be the quotient space of the disjoint union of
$C$ and $S_\xi$ by the equivalence
relation that identifies each
$x\in\d R$ with $\check h(x)\in S_\xi$, 
let $q\co C\coprod S_\xi\to E$
denote the corresponding quotient map,
and set $F=\d W$.
Since $C$ and $S_\xi$ are compact, 
$q\vert_C$ is a quotient map onto $E$, and 
$q\vert_{S_\xi}$ is a homeomorphism onto its image.
Then $q\vert_{\d R}$ can be identified with $\check h\vert_{\d R}$. 
Gluing $\check h\co R\to D_\xi$ and  
$q\vert_C\co C\to E$ along $\check h\vert_{\d R}$ gives 
the map $h$ as in the previous paragraph. 

The paper~\cite{CS76} studies when a Poincar{\'e} embedding is induced by
an embedding.

If $M\to W$ is a locally flat embedding, then its regular neighborhood
$R$ is a linear disk bundle~\cite[p.127]{Wal-book}, and the
corresponding h-Poincar{\'e} embedding can be described by
identifying $R$ with $D_\xi$, and $E$ with $W\setminus\mathrm{Int}(R)$,
and taking $h$ to be the identity map of $W$. 

A {\em map of h-Poincar{\'e} embeddings 
$\a$, $\b$ of $M$ into $W$\,} 
is a map of triples \[(E_\a, F_\a, D_{\xi_{\a}})\to 
(E_\b, F_\b, D_{\xi_{\b}})\]
such that $D_{\xi_{\a}}\to D_{\xi_{\b}}$ is a bundle map, and
the composite of $h_\a$ and
the map obtained by gluing $D_{\xi_{\a}}\to D_{\xi_{\b}}$ and 
$E_\a\to E_\b$ is homotopic to
$h_\b$ as a map of pairs with domain $(W,\d W)$.

We say that h-Poincar{\'e} embeddings $\a$, $\b$ of $M$ into $W$  are {\em isomorphic\,}
if there are maps $\a\to\b$ and $\b\to \a$ of h-Poincar{\'e} embeddings 
such that
both composites are homotopic to the identity through maps
of h-Poincar{\'e} embedding.

If $g\co M^\prime\to M$ is a homotopy equivalence of closed manifolds,
and $\a$ is an \mbox{h-Poincar{\'e}} embedding of $M$, then the
{\em pullback $g^*\!\a$ of $\a$}
is an h-Poincar{\'e} embedding of $M^\prime$ 
for which $\xi_{g^*\!\a}=g^*\xi_{\a}$,
$F_{g^*\!\a}=F_\a$, and $E_{g^*{\!\a}}$ is the union of $E_{\a}$
with the mapping cylinder of the bundle map 
$S_{\xi_{g^*{\!\a}}}\to S_{\xi_{\a}}$, and $h_{g^*{\!\a}}$
is the composite of $h_\a$ with a bundle map covering 
a homotopy inverse of $g$.

The following result is implicit in~\cite[Example 6.2.1]{CS76}.

\begin{lem}
Let $M_1$, $M_2$ be closed manifolds embedded into $W$, and
let $\a_1$, $\a_2$ be corresponding 
h-Poincar{\'e} embeddings. Suppose 
$W$ deformation retracts both onto $M_1$ and $M_2$.
If $M_2$ is locally flat, and the restriction of 
a deformation retraction $W\to M_2$ to $M_1$ is homotopic to
a homeomorphism $g\co M_1\to M_2$,  
then $g^*\!\a_2$ and $\a_1$ are isomorphic, or equivalently,
$\a_2$ is isomorphic to the pullback of $\a_1$ via $g^{-1}$.
\end{lem}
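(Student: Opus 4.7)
Since $M_2$ is locally flat, its regular neighborhood $R_2$ is a linear disk bundle $D_{\xi_2}$ with $\xi_2$ the normal $2$-plane bundle of $M_2$ in $W$, and $\alpha_2$ may be taken in the simple form $\xi_{\alpha_2}=\xi_2$, $E_{\alpha_2}=C_2:=W\setminus\Int(D_{\xi_2})$, $F_{\alpha_2}=\partial W$, and $h_{\alpha_2}=\id_W$. Unpacking the definition of pullback, $g^*\alpha_2$ has bundle $g^*\xi_2$, boundary $\partial W$, total space $E_{g^*\alpha_2}=C_2\cup_{S_{\xi_2}}\mathrm{Cyl}(S_{g^*\xi_2}\to S_{\xi_2})$, and map $h_{g^*\alpha_2}$ equal to $\id_W$ composed with a bundle map $D_{\xi_2}\to D_{g^*\xi_2}$ covering $g^{-1}$. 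Because $g$ is a homeomorphism, the induced bundle map $S_{g^*\xi_2}\to S_{\xi_2}$ is also a homeomorphism, so the inclusion $C_2\hookrightarrow E_{g^*\alpha_2}$ is a homotopy equivalence.

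Next I would show $\xi_1\cong g^*\xi_2$ as oriented $2$-plane bundles over $M_1$. Such bundles are classified by their Euler class in $H^2(M_1)$, so it suffices to prove $e(\xi_1)=g^*e(\xi_2)$. By Proposition~\ref{prop: engulf}(2) applied to the two deformation retractions of $W$, the map $\pi_2\vert_{M_1}$ pulls back the normal Euler class of $M_2$ in $W$ to the normal Euler class of $M_1$ in $W$; combined with $g\simeq \pi_2\vert_{M_1}$ this gives the equality $e(\xi_1)=g^*e(\xi_2)$ and hence an orientation-preserving bundle isomorphism $\Phi\co\xi_1\to g^*\xi_2$.

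With $\Phi$ in hand I would build a map of h-Poincar{\'e} embeddings $\alpha_1\to g^*\alpha_2$: use $\Phi$ on the disk bundle summands, the identity on $\partial W$, and construct the map on the $E$-parts by sending $S_{\xi_1}$ via $\Phi$ into $S_{g^*\xi_2}\subset \mathrm{Cyl}(S_{g^*\xi_2}\to S_{\xi_2})\subset E_{g^*\alpha_2}$ and sending the complement $C_1$ into $C_2\subset E_{g^*\alpha_2}$ by a map induced by a small ambient deformation of $W$ pulling $R_1$ into $\Int(D_{\xi_2})$, which exists because both $M_1, M_2$ are spines of $W$ and the relevant inclusions are homotopy equivalences. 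The inverse map is obtained symmetrically with $g^{-1}$ in place of $g$. The main obstacle is verifying that the induced map of pairs $(W,\partial W)\to (E_{g^*\alpha_2}\cup D_{g^*\xi_2},\partial W)$ is homotopic, as a map of pairs, to $h_{g^*\alpha_2}$, and that both composites of the constructed maps are homotopic, through maps of h-Poincar{\'e} embeddings, to the identities. This homotopy compatibility reduces to the homotopy-uniqueness of $\check h_1$ from~\cite[Proposition~1.6]{CS76} together with the fact that $g\simeq \pi_2\vert_{M_1}$, so that the two pushout presentations of the $E$-parts agree up to coherent homotopy.
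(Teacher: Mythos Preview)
Your Euler class computation showing $\xi_1\cong g^*\xi_2$ is correct, and in fact it makes explicit a point the paper takes for granted when it simply writes ``let $\bar g\co D_{\xi_1}\to D_{\xi_2}$ be the bundle map covering the homeomorphism $g$''.

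The genuine gap is in your construction of the map $E_{\alpha_1}\to E_{g^*\alpha_2}$. The ``small ambient deformation of $W$ pulling $R_1$ into $\Int(D_{\xi_2})$'' is not justified: engulfing arguments of the kind used in Proposition~\ref{prop: engulf}(1) require codimension at least~$3$, and here the codimension is~$2$. The mere fact that the relevant inclusions are homotopy equivalences does not produce such a deformation. Even granting a homotopy $\phi_t$ with $\phi_1(R_1)\subset\Int(D_{\xi_2})$, this would not yield a map $C_1\to C_2$, since $\phi_1(C_1)$ may still meet $D_{\xi_2}$. Finally, you do not check that your proposed map on $C_1$ and the map $\Phi$ on $S_{\xi_1}$ agree under the identification $\check h_1\co\partial R_1\to S_{\xi_1}$ defining $E_{\alpha_1}$; without this the map out of $E_{\alpha_1}$ is not even well-defined.

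The paper sidesteps all of this by never trying to push $C_1$ into $C_2$ geometrically. Instead it uses the deformation retractions $C_i\to\partial R_i$ (available precisely because $W$ retracts onto $M_i$) to replace each $E_i$ by a mapping-cylinder model $T_i\cup_{\partial R_i}S_i$; these models admit an obvious comparison map built from $\bar g\circ\check h_1$, and all four spaces retract onto $S_{\xi_1}$. A standard cofibration argument (\cite[Proposition~0.19]{Hat-book}) then upgrades the resulting homotopy equivalences to equivalences rel $S_{\xi_1}\coprod\partial W$, which is exactly what an isomorphism of h-Poincar\'e embeddings demands. If you repair your argument by replacing the ambient deformation with the retraction $C_1\to\partial R_1\to S_{\xi_1}$, you will find yourself reconstructing this approach.
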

\begin{proof}
Label the objects related to the h-Poincar{\'e} embedding
of $M_i$ into $W$ with the subscript $i\in\{1,2\}$.
Set $B_i=\d C_i\setminus\d W$.
Let $r_{i}^t\co C_i\to B_i$ be a deformation retraction with
$t\in[0, 1]$, where $r_i^0$ is the identity map of $C_i$, 
and $r_{i}^1(C_i)=B_i$. 
Denote the mapping cylinder
of $r_{i}^1\vert_{\d W}$ by $T_i$. 
The map $(x,t)\to r_{i}^t(x)$, 
where $x\in\d W$, descends to a map 
$\r_i\co T_i\to C_i$.
Sending the equivalence class of 
$(x,t)$ to itself defines a map $\tau\co T_1\to T_2$.

Let $\bar g\co D_{\xi_1}\to D_{x_2}$ be the bundle map covering the
homeomorphism $g$. 
Let $S_1$ be the mapping cylinder of 
$h_1\co \d R_1\to S_{\xi_1}$.
Let $S_2$ be the mapping cylinder of 
$\bar g^{-1}\co S_{\xi_2}\to S_{\xi_1}$,
where $\d R_2=S_{\xi_2}$. Represent 
the equivalence class of a point in these mapping
cylinders as a pair: a point in $\d R_i$ and $s\in [0,1]$.
The map $(y, s)\to ({\bar g}(h_1(y)), s)$, where $y\in\d R_1$, descends to
a map $\s\co S_1\to S_2$, which is well-defined because
$(y,1)\sim h_1(y)=\bar g^{-1}({\bar g}(h_1(y)))\sim ({\bar g}(h_1(y)), 1)$. 

If we glue $S_i$ to $T_i$ along $\d R_i$, then the above map
$\r_i\co T_i\to C_i$ extends to a map $T_i\cup_{\d R_i}\! S_i\to E_i$
which is the identity on $S_{\xi_1}\coprod\d W$,
and is also a homotopy equivalence because both 
$T_i\cup_{\d R_i}\! S_i$ and $E_i$ deformation retract onto $S_{\xi_1}$.
Similarly, the map $T_1\cup_{\d R_1}\! S_1\to T_2\cup_{\d R_2}\! S_2$
glued from $\tau$ and $\s$ is a homotopy equivalence
that is the identity on $S_{\xi_1}\coprod\d W$. 
By~\cite[Proposition 0.19]{Hat-book} there are homotopy equivalences 
$T_i\cup_{\d R_i}\! S_i\to E_i$
and $T_1\cup_{\d R_1}\! S_1\to T_2\cup_{\d R_2}\! S_2$
rel $S_{\xi_1}\coprod\d W$. This yields a homotopy equivalence
$E_1\to E_2$ rel $S_{\xi_1}\coprod\d W$ which easily gives rise
to an isomorphism $g^*\!\a_2\cong\a_1$.
\end{proof}

\small
\bibliographystyle{amsalpha}
\bibliography{rn}

\end{document}